\newtheorem{bem}{Bemerkung}[section]}
\newtheorem{proposition}[bem]{Proposition}
\newtheorem{theorem}[bem]{Theorem}
\newtheorem{lemma}[bem]{Lemma}
\newtheorem{corollary}[bem]{Corollary}
\newtheorem{remark}[bem]{Remark}
\newtheorem{definition}[bem]{Definition}
\newtheorem{example}[bem]{Example}
\newcommand{\e} {\varepsilon}
\newcommand{\Z} {\mathbb{Z}}
\newcommand{\N} {\mathbb{N}}
\newcommand{\E} {\mathbb{E}}
\newcommand{\R} {\mathbb{R}}
\newcommand{\p} {\mathcal{P}}
\newcommand{\Q} {\mathcal{Q}}
\newcommand{\A} {\mathcal{A}}
\newcommand{\B} {\mathcal{B}}
\newcommand{\X} {\mathcal{X}}
\newcommand{\F} {\mathcal{F}}
\title{On converse Lyapunov theorems for fluid network models\thanks{This research was funded by the Volkswagen Foundation under grant I/83 087}}
\author{Michael Sch\"onlein,  Fabian Wirth\thanks{Institute for Mathematics, University of W\"urzburg, Emil-Fischer Stra\ss e. 40, 97074 W\"urzburg, Germany 
              $\{$schoenlein,wirth$\}$@mathematik.uni-wuerzburg.de} }
\begin{document}


%
%


\maketitle

\begin{abstract}
We consider the class of closed generic fluid networks (GFN) models, which
provides an abstract framework containing a wide variety of fluid
networks. Within this framework a Lyapunov method for stability of GFN
models was proposed by Ye and Chen. They proved that stability of a GFN
model is equivalent to the existence of a functional on the set of paths
that is decaying along paths. This result falls short of a converse Lyapunov
theorem in that no state dependent Lyapunov function is constructed. In this
paper we construct state-dependent Lyapunov functions in contrast to
path-wise functionals. We first show by counterexamples that closed GFN
models do not provide sufficient information that allow for a converse
Lyapunov theorem. To resolve this problem we introduce the class of strict
GFN models by forcing the closed GFN model to satisfy a concatenation
and a semicontinuity condition of the set of paths in dependence
of initial condition. For the class of strict GFN models we define a state-dependent Lyapunov and show that a converse Lyapunov theorem holds. Finally, it is shown that common fluid network models, like general work-conserving and priority fluid network models as well as certain linear Skorokhod problems define strict GFN models.

\end{abstract}

\section{Introduction}\label{intro}

An effective tool to model complex manufacturing systems, computer systems or
telecommunication networks is the family of multiclass queueing networks. An
example for this occurs in semiconductor fabrication, where production lines are
modeled as reentrant lines, which are a special case of multiclass queueing
networks. Especially in the pursuit of deriving good control strategies for
multiclass queueing networks the question of stability arises. For a long
period a common belief was that a sufficient condition for stability is that
the traffic intensity is strictly less then one. But in 1993 Kumar and Seidman
\cite{kumarseidman90} presented a network with two stations processing four
types of jobs which is unstable although the traffic intensity at each station
is less than one. This example inspired a number of examples with different
service disciplines, like first-in-first-out (FIFO) and priority, that have
surprising properties. In the literature they are known as the Lu-Kumar
network, the Rybko-Stolyar network or the Bramson network, see
e.g. \cite{bramsonfifo} or \cite{bramsonLN}, \cite{daihavv04} and
\cite{rybkostolyar}. In recent years further disciplines like maximum pressure
and join-the-shortest-queue are investigated \cite{daihaba07}, \cite{dailin05}, \cite{dailin08}. Rybko and Stolyar \cite{rybkostolyar} and Dai \cite{dai}
pursued the strategy of rescaling the stochastic processes that describe the
dynamics of a multiclass queueing network and considered the limit obtained under scaling. This limit is called the fluid limit model for the queueing network
and is a continuous deterministic model. Of course, deterministic models
are much easier to investigate. The great benefit of this approach is, that
the stability of the corresponding fluid limit model is sufficient for the
stability of a multiclass queueing network \cite{dai}. In addition, there are conditions for instability of queueing networks relative to their fluid limit model \cite{dai96,pukhalski2000nonergodicity}. A discussion of the
relationship between queueing networks and fluid models can be found in
\cite{bramsonLN}.

Due to this fact the question arises, under which conditions fluid limit
models are stable. A fluid model is called stable if the fluid level process
$Q$ with unit initial level is drained to zero in a uniform finite time $\tau$ and
remains zero beyond $\tau$. Of course, conditions that guarantee stability
depend on the service discipline of the network. In \cite{chen} Chen states
necessary and sufficient conditions for stability of general work-conserving
fluid networks. Stability conditions for fluid networks under FIFO and
priority discipline have been derived by Chen and Zhang \cite{chenfifo},
\cite{chenpriority}. Often the strategy to prove such conditions is to use a
Lyapunov function. In this context a locally Lipschitz function $V:\mathbb{R}_+^K
\rightarrow \mathbb{R}_+$ such that $V(x)=0$ if and only if $x=0$ is called a
Lyapunov function, if there exists a constant $\varepsilon>0$ such that for
each fluid model solution it holds that
\begin{equation*} 
 \tfrac{d}{dt}  V(Q(t)) \leq - \varepsilon
\end{equation*}
whenever $Q(t)\not=0$ and the derivative at time $t$ exists for 
the map $s\mapsto V(Q(s))$. 
For more details
see \cite{daischool}. Within this framework linear Lyapunov functions of the
form
\begin{equation*} 
  V(x) = h^Tx, \qquad x \in \R_+^n 
\end{equation*}
where $h$ is some positive vector in $\mathbb{R}_+^K$ are used to establish a sufficient condition for the stability of fluid network models under a priority discipline \cite{chenpriority}. The special case for this where $h=(1,...,1)^T$ is used the show that a fluid model of a re-entrant line operating under last-buffer-first-served (LBFS) service discipline is stable, if the usual traffic condition $\rho_j< 1$ is satisfied for all stations $j$ \cite{daischool}. This special case is also used to prove a stability condition for fluid networks under the join-the-shortest-queue discipline \cite{daihaba07}. Ye and Chen investigated fluid networks under priority disciplines by using piecewise linear Lyapunov functions of the form
\begin{equation*} 
  V(x) = \max_{1\leq j  \leq N}  h_j^T x
\end{equation*}
for some nonnegative vectors $h_1,...,h_N$, for details see \cite{yechenpw}. This approach yields a sharper stability condition for fluid networks under priority discipline than in \cite{chenpriority}. Furthermore, in the verification of a stability condition for fluid networks under general work-conserving disciplines a quadratic Lyapunov function
\begin{equation*} 
  V(x) = x^T\,A\, x
\end{equation*}
with a strictly copositive matrix $A$ is used \cite{chen}. What all the works mentioned above have in common is that the existence of Lyapunov functions is only shown to be sufficient for stability. 

Before we investigate the question whether the existence of a Lyapunov function is also necessary for the stability of a fluid network, we recall briefly the basic idea of a Lyapunov function from the theory of dynamical systems. For a detailed description the reader is referred e.g. to \cite{bacciotti}, \cite{hpI}.  
Consider a dynamical system
\begin{equation}\label{tvsys}
 \dot x = f(x), \qquad x \in \mathbb{R}^n, \, t \in [0, \infty)
\end{equation}
with initial condition $x(0)=x_0$ and continuous $f$, where the origin is an equilibrium position, i.e. $f(0)=0$. A real valued map $V: B_r \subset \mathbb{R}^n \rightarrow \mathbb{R}$ is called a strict Lyapunov function for \eqref{tvsys} if (i) it is positive definite and proper, i.e. there exist continuous and strictly increasing function $a,b:[0,r)\rightarrow [0,\infty)$ with $a(0)=b(0)=0$ such that
\begin{equation}\label{tvlyap-1}
 a(\|x\|) \leq V(x) \leq b(\|x\|),\qquad x \in B_r 
\end{equation}
and (ii) if there exists a continuous and strictly increasing function $w:[0,r)\rightarrow [0,\infty)$ with $w(0)=0$ such that for every solution $x(\cdot)$ and each interval $I\subset [0,\infty)$ one has
\begin{equation}\label{tvlyap-2}
 V(x(t_2))- V(x(t_1)) \leq - \int_{t_1}^{t_2} w(\|x(t)\|) dt 
\end{equation}
for each $t_1<t_2 \in I$ provided that $x(\cdot)$ is defined on $I$ and $x(t)\in B_r$ for $t\in I$. It is well known that the origin is 
locally asymptotically stable, if and only if there is a strict Lyapunov function \cite{bacciotti}.

In order to obtain a so called converse Lyapunov theorem for fluid networks Ye and Chen followed a different, more general approach \cite{Yechen}. They collected the characteristic properties of fluid networks and defined a generic fluid network (GFN) model as set $\Phi$ of functions $Q:\mathbb{R}_+ \rightarrow \mathbb{R}_+^K$ that satisfy a few natural properties. A precise description of a GFN model is given in Section~\ref{GFN}. They proved that stability of a GFN model is equivalent to the property that for every function $Q \in \Phi$ a Lyapunov functional $v:\mathbb{R}_+\rightarrow \mathbb{R}_+$ is decaying along $Q$. In particular, $v$ can be chosen as
\begin{equation}\label{tvlyap}
 v(t) = \int_{t}^{\infty} \|Q(s)\| ds. 
\end{equation}

It can be seen that this approach differs from the one taken in the theory of dynamical systems in which Lyapunov functions are state-dependent. The dependence on solutions is undesirable, because the benefit of Lyapunov's second method is that trajectories need not be known to be able to determine stability, whereas the method of Ye and Chen requires the knowledge of all solutions. In this paper we define a state-dependent Lyapunov function and prove a converse Lyapunov theorem. 

This paper is organized as follows. In the Section~\ref{GFN} we recall the
definition of a GFN model from \cite{Yechen}. Further we discuss
counterexamples to emphasize that the class of (closed) GFN models is too
general to provide a converse Lyapunov theorem with state-dependent Lyapunov
functions. In the Section~\ref{results} we introduce the class of strict GFN models by forcing the closed GFN models to satisfy additionally a
concatenation and a lower semicontinuity property. The concatenation property
is essential for state-dependent Lyapunov functions whereas lower
semicontinuity gives the additional benefit of continuity. For this model
class we define a state-dependent Lyapunov function and prove that within this
framework the stability of a strict GFN model is equivalently
characterized by the existence of a state-dependent Lyapunov function. In
Section~\ref{di-elements} we recall some results from differential inclusions
and viability theory that will be useful in Section~\ref{appl-fn}. There we
show that 
general work-conserving and priority fluid networks define strict GFN
models. In Section~\ref{sec:fluidlimit} we consider fluid limit models of
queueing networks for a specific class of disciplines and in
Section~\ref{sec:skorokhod} we comment on linear Skorokhod problems. 
In Section~\ref{FIFOsec} we explain why the approach of the current paper is
not immediately applicable to FIFO systems. We conclude in
Section~\ref{conclusions}. 

We now collect some notations that will be used throughout the paper. By $\R_+^K$ we denote the nonnegative orthant $\{x \in \R^K : x \geq 0\}$, where $\geq$ has to be understood component-wise. Throughout the paper we mostly consider the space $(\R_+^K,\|\cdot\|)$ with $\|x\|:= \sum_{i=1}^K |x_i|$. 
Let $D(\R_+,\R_+^K)$ denote the space of right continuous functions $f :\R_+ \rightarrow \R_+^K$ having left limits that is endowed with the Skorokhod topology \cite{ethier1986markov}. Let $C(\R_+,\R_+^K)$ be the subset of continuous functions.
A sequence of functions, denoted by $(f_n(t))_{n\in \mathbb{N}}$, in $D(\R_+,\R_+^K)$ is said to converge uniformly on compact sets (u.o.c.) to a continuous function $f(t)\in C(\R_+,\R_+^K)$, if for any $T>0$
\begin{equation*}
 \lim_{n\rightarrow  \infty} \, \, \sup_{t\in [0,T]} \, \|f_n(t) - f(t)\|=0.
\end{equation*}
We say that a function $g:\R_+^K \rightarrow \R$ is upper semicontinuous in $a\in \R_+^K$, if $g(a) \geq \limsup_{x \rightarrow a} g(x)$. Of course, $g$ is called upper semicontinuous if it is upper semicontinuous for every $a \in \R_+^K$. Further a function $g : \R_+^K \rightarrow \R_+$ is lower semicontinuous at $a \in  \R_+^K$ if $-g$ is upper semicontinuous in $a\in \R_+^K$ and $g$ is called lower semicontinuous if $g$ is lower semicontinuous in every point. We use $\rightsquigarrow$ to denote set-valued maps. Let $X$ and $Y$ denote metric spaces. A set-valued map $F:X \rightsquigarrow Y$ is a mapping that maps every $x \in X$ into a set $F(x)$, called the value of $F$ at $x$. The domain of a set-valued map $F:X \rightsquigarrow Y$ is the subset of elements $x\in X$ such that the values $F(x)$ are non empty. i.e. $\mbox{dom}(F)=\{x\in X\, :\, F(x)\not =\emptyset\}$. The image of $F$ is the union of all values $F(x)$ for all $x \in X$. The graph of a set-valued map $F$ is $\mbox{graph}(F):=\{(x,y)\in X\times Y\, : \,y \in F(x)\}$. A set-valued map $F$ is said to be closed-valued if the values of $F$ are closed, i.e. for every $x\in X$ the set $F(x)$ is closed. Accordingly, $F$ is said to be convex if the images are convex. Moreover, a set-valued map $F:X\rightsquigarrow Y$ is called lower semicontinuous at $x \in \mbox{dom}(F)$ if for any $y \in F(x)$ and for any sequence of elements $(x_n)_{n \in \N} \in \mbox{dom}(F)$ converging to $x$, there exists a sequence $(y_n)_{n \in \N}$ with $y_n \in F(x_n)$ converging to $y$. $F$ is said to be lower semicontinuous if it is lower semicontinuous at every point $x \in \mbox{dom}(F)$.
In addition, 
a set-valued map
$F$ is called upper semicontinuous at $x \in$ dom$(F)$,
if for any open neighborhood $U \supset F(x)$ there is an $\e>0$ such that
for all $x' \in B(x,\e)\cap {\mathrm dom}\,(F)$ it holds that $F(x')
\subset U$.  Again $F$ is said to be upper semicontinuous if it is upper
semicontinuous at every point $x \in \mbox{dom}(F)$.

Finally, by $\mathcal{K}$ we denote the set of continuous functions $w:\R_+ \rightarrow \R_+$ that satisfy $w(0)=0$ and are strictly increasing.

\section{Generic fluid network models}\label{GFN}

In this section we consider generic fluid network models introduced by Ye
and Chen in 
\cite{Yechen}.  They present a trajectory-based Lyapunov method for
characterizing the stability of fluid networks, in which the Lyapunov
function depends on the path of the closed GFN model.  First we recall
from \cite{Yechen} the definition of a closed generic fluid network
(closed GFN) model and the conditions for a function to be a Lyapunov
function. Then we define a candidate for a Lyapunov function that does not
depend on the path and show that in the setting it is not continuous in
general. 
Further we give a counterexample that shows that within the class
of closed GFN models the concatenation of two paths is not automatically
contained in a closed GFN model, if the queue lengths at the time of
concatenation coincide.

\begin{definition}\label{gfn}\cite{Yechen}
A nonempty set $\Phi$ of functions $Q(\cdot) :\R_+ \rightarrow \R_+^K$ is said to be a \emph{GFN model}, if
\begin{enumerate}
\item[(a)] there is a $L>0$, such that for any $Q(\cdot) \in \Phi$ and $t,s \in  \R_+$ it holds that 
\begin{equation*}
\|Q(t)-Q(s)\| \leq L \, |t-s|.
\end{equation*}
\item[(b)] $Q(\cdot) \in \Phi$ implies $\frac{1}{r}Q(r\cdot) \in \Phi$ for all $r>0$.
\item[(c)] $Q(\cdot) \in \Phi$ implies $Q(s+\cdot) \in \Phi$ for all $s\geq0$.
\end{enumerate}
Furthermore, if the following condition is also satisfied, then we call $\Phi$ a \emph{closed GFN model}.
\begin{enumerate}
\item[(d)] if a sequence $(Q_n)_{n \in \N} \subset \Phi$ converges to $Q_*$ u.o.c, then $Q_* \in \Phi$.
\end{enumerate}
\end{definition}

Any element $Q(\cdot)$ of $\Phi$ is called a path (of $\Phi$) and the set of paths with initial level one is denoted by $\Phi(1) = \{ Q(\cdot) \in \Phi: \|Q(0)\|=1\}$. 
Condition (a) states that the functions $Q(\cdot)$ are Lipschitz continuous, where condition (b) is a scaling property and condition (c) is a shift property. 
We note that the terminology closed is not related to closed queueing networks where a fixed number of job circulate in the network. 
Rather, the content of condition (d) is that the set $\Phi$ is closed in the topology of uniform convergence on compact sets. For future use we also introduce for $x \in \R_+^K$ the set $ \Phi_x = \{ Q (\cdot)\in \Phi: Q(0)=x\}$. Moreover we 
recall from \cite{Yechen} the definition of stability of a GFN model. 

\begin{definition}\label{GFNstab}
A GFN model $\Phi$ is said to be \emph{stable}, if there exists a $\tau >0$, such that $Q(\tau + \cdot) \equiv 0$ for any path $Q(\cdot) \in \Phi(1)$.
\end{definition}

From a dynamical systems perspective the definition of stability for a closed GFN model $\Phi$ seems to deviate from the asymptotic stability in the Lyapunov sense, where asymptotic stability is described by the following. The zero path is said to be asymptotically stable, provided that

\begin{lemma}\label{stabdefequiv}
$\Phi$ is stable if and only if the zero path is asymptotically stable in the sense of Lyapunov.
\end{lemma}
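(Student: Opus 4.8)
The plan is to prove the two implications separately, in both directions using the scaling property~(b) of Definition~\ref{gfn} to pass between initial level one and arbitrary initial levels, the Lipschitz bound~(a) to control overshoots, and the closedness~(d) together with the Arzel\`a--Ascoli theorem to obtain uniformity.

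\emph{Stability implies asymptotic stability.} Let $\tau>0$ be as in Definition~\ref{GFNstab}. Given $Q(\cdot)\in\Phi$ with $a:=\|Q(0)\|>0$, apply Definition~\ref{GFNstab} to $\tfrac{1}{a}Q(a\,\cdot)\in\Phi(1)$ to obtain $Q(t)=0$ for all $t\ge a\tau$; combined with~(a) this yields the linear overshoot estimate $\|Q(t)\|\le(1+L\tau)\,\|Q(0)\|$ for all $t\ge0$ (the same estimate forces $Q\equiv0$ when $Q(0)=0$). This gives Lyapunov stability with $\delta=\varepsilon/(1+L\tau)$, and attractivity is immediate since every path is identically zero after a finite time.

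\emph{Asymptotic stability implies stability.} First I would record two consequences of asymptotic stability together with~(b): (i) there is $c\ge1$ with $\|Q(t)\|\le c\,\|Q(0)\|$ for every $Q(\cdot)\in\Phi$ and all $t\ge0$ --- obtained, as above, by rescaling a $\delta$ corresponding to some fixed $\varepsilon$ in the definition of Lyapunov stability; in particular, by~(c), $\|Q(s)\|=0$ forces $Q(s+\cdot)\equiv0$; and (ii) every path converges to $0$ --- obtained by rescaling a path into the neighborhood on which attractivity holds. The key step is a uniformity statement on $\Phi(1)$: there is $T>0$ with $\|Q(T)\|\le\tfrac{1}{2c}$ for all $Q(\cdot)\in\Phi(1)$. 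I would prove this by contradiction. If it fails there are $Q_n(\cdot)\in\Phi(1)$ with $\|Q_n(n)\|>\tfrac{1}{2c}$; by~(a) and~(i) the family $(Q_n)$ is uniformly bounded and equi-Lipschitz on every compact interval, so by the Arzel\`a--Ascoli theorem and a diagonal argument a subsequence $(Q_{n_k})$ converges u.o.c.\ to some $Q_*$, and $Q_*\in\Phi(1)$ by~(d). Using~(ii) pick $N$ with $\|Q_*(N)\|<\tfrac{1}{4c^2}$; then $\|Q_{n_k}(N)\|<\tfrac{1}{4c^2}$ for $k$ large, and applying~(i) to the shift $Q_{n_k}(N+\cdot)\in\Phi$ gives $\|Q_{n_k}(t)\|<\tfrac{1}{4c}<\tfrac{1}{2c}$ for all $t\ge N$, contradicting $\|Q_{n_k}(n_k)\|>\tfrac{1}{2c}$ once $n_k\ge N$.

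Given such a $T$, properties~(i) and~(c) upgrade the bound to $\|Q(t)\|\le\tfrac12$ for all $t\ge T$ and all $Q(\cdot)\in\Phi(1)$, and rescaling via~(b) yields: $\|Q(0)\|=a$ implies $\|Q(t)\|\le a/2$ for all $t\ge aT$. Iterating this contraction along a fixed $Q(\cdot)\in\Phi(1)$, applied in turn to the shifts of $Q$, produces times $0=t_0<t_1<t_2<\cdots$ with $t_{k+1}-t_k\le2^{-k}T$ and $\|Q(t_k)\|\le2^{-k}$; hence $t_k\uparrow t_\infty\le2T$ and, by continuity, $\|Q(t_\infty)\|=0$, so by~(i) and~(c) we get $Q(2T+\cdot)\equiv0$, i.e.\ $\Phi$ is stable with $\tau=2T$. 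I expect the uniformity step to be the main obstacle: distilling a single time $T$ valid for all of $\Phi(1)$ is exactly where closedness~(d) is indispensable (through the Arzel\`a--Ascoli limit), and one must ensure the diverging evaluation times $n$ do not spoil the limiting argument --- handled above by transferring the smallness of $\|Q_*(N)\|$ back to the $Q_{n_k}$ at the fixed time $N$ and then propagating it forward with the overshoot bound~(i).
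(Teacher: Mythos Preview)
Your proof is correct, but it follows a genuinely different route from the paper's. The paper's argument is very short and largely outsourced: for the forward direction it derives attractivity by scaling and then, for Lyapunov stability, invokes Stolyar's Theorem~6.1 (the equivalence of stability with the condition $\inf_{t\ge0}\|Q(t)\|\le 1$ for all $Q\in\Phi(1)$); for the converse direction it simply observes that attractivity gives $\|Q(t)\|\to0$ for each $Q\in\Phi(1)$ and then refers the reader to the proof of Stolyar's Theorem~6.1 for the remaining step. In contrast, your argument is entirely self-contained. In the forward direction you obtain Lyapunov stability directly from the Lipschitz overshoot bound $\|Q(t)\|\le(1+L\tau)\|Q(0)\|$, which is cleaner than the paper's detour through Stolyar. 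In the converse direction you replace the reference to Stolyar by an explicit compactness-plus-contraction scheme: the Arzel\`a--Ascoli/diagonal argument together with closedness~(d) yields a uniform time $T$ at which every path in $\Phi(1)$ has dropped below $1/(2c)$, and the scaling iteration then gives the uniform extinction time $\tau=2T$. The trade-off is that the paper's proof is shorter but depends on an external reference, while yours makes explicit use of condition~(d) (which the paper's statement of the lemma does not mention, although the surrounding discussion is about closed GFN models) and exposes exactly where uniformity comes from.
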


\begin{proof}
Suppose that $\Phi$ is stable, i.e. there is a $\tau < \infty$ such that $Q(t)=0$ for all $Q \in \Phi(1)$ and all $t\geq \tau$. Let $Q(\cdot) \in \Phi$ and denote $q:=\|Q(0)\|$. The scaling property implies that $\frac{1}{q} Q(q\,t) \in \Phi(1)$ and consequently, $Q(qt)=0$ for all $t\geq q^{-1}\tau$. This implies $\lim_{t \rightarrow \infty} \|Q(t)\| =0$ and attractivity holds true. In \cite{stolyar95} Theorem~6.1 it is shown that the stability of $\Phi$ is equivalent to the condition
\begin{align*}
 \inf_{t\geq 0} \|Q(t)\| \leq 1
\end{align*}
for all $Q(\cdot) \in \Phi(1)$. This means that there is no $Q(\cdot) \in \Phi$ such that $\|Q(t)\| \geq \|Q(0)\|$ for all $t\geq 0$. Stability then follows by choosing $\e =\delta$.\\
Conversely, let $Q(\cdot)\equiv 0$ be asymptotically stable in the sense of Lyapunov. Due to the scaling property it suffices to consider a $Q(\cdot) \in \Phi(1)$. Then, by attractivity it holds that $\lim_{t \rightarrow \infty} \|Q(t)\| =0$. Proceeding exactly as in the proof of Theorem~6.1 in \cite{stolyar95} shows the assertion.

\end{proof}

The notion of stability of a GFN may also be 
expressed by saying that the zero fluid level process $Q_0(\cdot) \equiv
0$ is the unique stable and attractive fixed point of the shift
operator $\delta_{\tau}Q\,(\cdot):=Q(\cdot+\tau)$ defined on $\Phi$.

The Lyapunov method to characterize stability of closed GFN models presented in \cite{Yechen} is as follows. A GFN model $\Phi$ is said to satisfy the L-condition, if there exist class $\mathcal{K}$-functions $w_i: \R_+ \rightarrow \R_+$, $i=1,2,3$ such that for any GFN path $Q\in \Phi$ there exists an absolutely continuous function $v: \R_+ \rightarrow \R_+$ such that
\begin{align}\label{fLF1-functionals}
w_1(\|Q(t)\|) &\leq v(t)  \leq w_2(\|Q(t)\|),\\ \label{fLF2-functionals}
\dot v(t) &\leq -  w_3(\|Q(t)\|)
\end{align} 
for almost all $t\geq 0$. The corresponding converse Lyapunov theorem is then

\begin{theorem}
A GFN model $\Phi$ is stable if and only if the $L$-condition is satisfied. In particular, given $Q \in \Phi$ the function $v$ can be chosen as
\begin{equation}\label{lyapfuncyechen}
  v(t) :=  \int_t^{\infty} \| Q(s)\| ds.
\end{equation} 
\end{theorem}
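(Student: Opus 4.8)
The plan is to establish the two implications separately; the explicit choice \eqref{lyapfuncyechen} will emerge constructively in the proof of the ``only if'' part.

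\emph{$L$-condition $\Rightarrow$ stability.} Here I would reduce to Lemma~\ref{stabdefequiv} and verify that the zero path is asymptotically stable in the Lyapunov sense. Since \eqref{fLF2-functionals} forces the absolutely continuous function $v$ attached to a path $Q$ to be nonincreasing, the sandwich \eqref{fLF1-functionals} gives $w_1(\|Q(t)\|)\le v(t)\le v(0)\le w_2(\|Q(0)\|)$ for all $t\ge 0$; for a prescribed $\e>0$, choosing any $\delta>0$ with $w_2(\delta)\le w_1(\e)$ and using monotonicity of $w_1,w_2\in\mathcal K$ yields $\|Q(t)\|\le\e$ whenever $\|Q(0)\|\le\delta$, i.e.\ Lyapunov stability. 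For attractivity, integrating \eqref{fLF2-functionals} gives $\int_0^\infty w_3(\|Q(s)\|)\,ds\le v(0)<\infty$ for every $Q\in\Phi$, and combined with the uniform Lipschitz estimate~(a) this rules out $\limsup_{t\to\infty}\|Q(t)\|>0$: if $\|Q(t_n)\|\ge\e$ along times $t_n\to\infty$ that are at least $\e/L$ apart, then $\|Q\|\ge\e/2$ on disjoint intervals of length $\e/(2L)$ around the $t_n$, contradicting integrability of $w_3(\|Q\|)$. Thus the zero path is asymptotically stable, and Lemma~\ref{stabdefequiv} yields stability of $\Phi$.

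\emph{Stability $\Rightarrow$ $L$-condition.} Assume $\Phi$ stable with uniform absorption time $\tau$ and set $v(t):=\int_t^\infty\|Q(s)\|\,ds$ for a fixed $Q\in\Phi$. First one checks that $v$ is finite: writing $q:=\|Q(0)\|$, the scaling property~(b) gives $\tfrac1q Q(q\cdot)\in\Phi(1)$ when $q>0$, hence $Q(s)=0$ for $s\ge q\tau$ and the integral is over a compact set; moreover $s\mapsto\|Q(s)\|$ is Lipschitz by~(a), so $v$ is in fact $C^1$ with $\dot v(t)=-\|Q(t)\|$, which is \eqref{fLF2-functionals} with $w_3(r):=r\in\mathcal K$. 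For the lower bound I use the shift property~(c): $v(t)=\int_0^\infty\|Q(t+u)\|\,du$ and $\|Q(t+u)\|\ge\|Q(t)\|-Lu$, so integrating over $u\in[0,\|Q(t)\|/(2L)]$ gives $v(t)\ge\|Q(t)\|^2/(4L)=:w_1(\|Q(t)\|)$ with $w_1\in\mathcal K$.

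The substantive step is the upper bound $v(t)\le w_2(\|Q(t)\|)$. Using~(c) it suffices to bound $\int_0^\infty\|\bar Q(s)\|\,ds$ by $w_2(\|\bar Q(0)\|)$ for every $\bar Q\in\Phi$. When $\|\bar Q(0)\|=q>0$, the scaling property reduces this, via $\bar Q(s)=q\,\tilde Q(s/q)$ with $\tilde Q\in\Phi(1)$, to the \emph{uniform} estimate $c:=\sup_{\tilde Q\in\Phi(1)}\int_0^\infty\|\tilde Q(u)\|\,du<\infty$, since then $\int_0^\infty\|\bar Q(s)\|\,ds=q^2\int_0^\infty\|\tilde Q(u)\|\,du\le c\,q^2$, so one may take $w_2(r):=c\,r^2\in\mathcal K$. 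Finiteness of $c$ is exactly where the \emph{uniformity} of $\tau$ enters: for $\tilde Q\in\Phi(1)$ one has $\tilde Q(u)=0$ for $u\ge\tau$ and $\|\tilde Q(u)\|\le 1+Lu$ on $[0,\tau]$ by~(a), whence $c\le\tau+L\tau^2/2$. The degenerate case $\|\bar Q(0)\|=0$ needs $\bar Q\equiv 0$, which again follows from stability and scaling: a nonzero value $a=\|\bar Q(t^*)\|$ would, after scaling by a small $r>0$, produce a path attaining norm $a/r$, whereas once such a path has passed through level one its norm is capped by $1+L\tau$ via the Lipschitz bound and the absorption time — impossible for small $r$. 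I expect this upper bound, and in particular the bookkeeping that converts $\tau$ into a quadratic $\mathcal K$-function through the scaling property, to be the only genuinely delicate point; the lower bound, the differentiability of $v$, and the converse implication are direct consequences of the axioms~(a)--(c) together with Lemma~\ref{stabdefequiv}.
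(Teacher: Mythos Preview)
The paper does not actually prove this theorem; it is quoted as a result of Ye and Chen \cite{Yechen}, so there is no in-paper proof to compare against directly. Your argument is correct and in fact anticipates, almost step for step, the proof the paper later gives for its own Theorem~\ref{FLtheo}: the same choices $w_3(r)=r$, $w_1(r)\sim r^2/L$ from the Lipschitz bound~(a), and $w_2(r)\sim (1+L\tau)\tau\,r^2$ from scaling~(b) together with the uniform absorption time appear there essentially verbatim, and the sufficiency direction is the same monotone-decrease reasoning (you phrase attractivity via integrability of $w_3(\|Q\|)$ plus Lipschitz regularity, the paper via the contradiction $V(Q(t))\to-\infty$; these are interchangeable). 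The one point you handle that the paper's later proof leaves implicit is the verification that $Q(t)=0$ forces $Q(t+\cdot)\equiv 0$ under stability, which is needed to make the upper bound $v(t)\le w_2(\|Q(t)\|)$ hold at zero; your scaling argument for this is sound.
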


We note that an equivalent way of interpreting $v$ is as a functional $\bar v:\Phi\rightarrow \R_+$ on the GFN model with the following properties. There are comparison function such that for each path $Q(\cdot) \in \Phi$ its value under the functional $\bar v$ can be estimated from below and above by its initial value. That is, for any $Q(\cdot) \in \Phi$ it holds that
\begin{align*}
w_1(\|Q(0)\|) \leq \bar v(Q)  \leq w_2(\|Q(0)\|).
\end{align*}
Furthermore, the evolution of $\bar v(Q)$ can also be estimated in terms of a comparison function. Precisely, the mapping $t\mapsto \bar v(Q)(t+ \cdot)$ satisfies
\begin{align*}
\frac{d}{dt} \bar v(Q)(t+\cdot) \leq -  w_3(\|Q(t)\|).
\end{align*}
For this reason we refer to $v$, interpreted as $v(0) =: \bar v(Q)$, as a Lyapunov functional. As mentioned in the introduction the drawback of this definition is that the Lyapunov functional is path-dependent as opposed to be state-dependent, which is the basic idea of a Lyapunov function for a dynamical system. 
The definition of a Lyapunov function that only uses information of the
state is as follows. We denote by $ \A (\Phi)=\{x \in \R_+^K : \exists \,
Q(\cdot) \in \Phi,\, Q(0)=x\}$.

\begin{definition}
\label{def:Lyapdef}
Given a GFN model $\Phi$ a function $V : \A(\Phi) \rightarrow \R_+$ is said
to be a Lyapunov function, if there exist class $\mathcal{K}$ functions
$w_i : \R_+ \rightarrow \R_+,\, i=1,2,3$ such that
\begin{align}\label{fLF1}
w_1(\|x\|) \leq V(x)  &\leq w_2(\|x\|)\,,\quad x\in {\cal A}(\Phi)\\\label{fLF2}
V(Q(t)) - V(Q(s)) &\leq - \int_{s}^{t} \,w_3(\|Q(r)\|)\,dr
\end{align}
for all $0\leq s \leq t \in \R_+$ and all paths $Q(\cdot) \in \Phi$.
\end{definition}

For our purposes a certain candidate is useful; we consider in particular
$V : \A(\Phi) \rightarrow \R_+ \cup \{\infty\}$ defined by

\begin{equation}\label{lyapfunc}
V(x) = \sup_{Q (\cdot) \in \Phi_x} \int_0^{\infty} \| Q(s)\| ds.
\end{equation}

In the sequel we assume that $\A(\Phi) = \R_+^K$. The function $V$ defined
in \eqref{lyapfunc} can be interpreted as a measurement of the state $x$
in the sense that $V(x)$ represents the total possible fluid mass that the
network has to deal with. An interesting question concerns the regularity
of $V$. Of course, we aim for continuous dependence on the state, as this
would entail robustness of stability, see \cite{teel2000smooth},
\cite{kelletteel04}. Note that for stable closed GFN models the supremum
in \eqref{lyapfunc} is actually attained because of the requirement of
closedness in Definition~\ref{gfn}~(d).

\begin{proposition}\label{usc}
  If $\Phi$ is a stable closed GFN model, then the function $V: \A(\Phi)
  \rightarrow \R_+ $ defined in \eqref{lyapfunc} is well defined and upper
  semicontinuous.
\end{proposition}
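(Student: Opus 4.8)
The plan is to establish two things: first, that $V(x) < \infty$ for every $x \in \A(\Phi)$ (well-definedness), and second, that $V$ is upper semicontinuous, i.e., $\limsup_{y \to x} V(y) \le V(x)$ at each $x$. For well-definedness I would exploit stability together with the scaling property (b): given $Q(\cdot) \in \Phi_x$ with $x \ne 0$, the rescaled path $\tfrac{1}{\|x\|}Q(\|x\|\,\cdot)$ lies in $\Phi(1)$, so by stability it vanishes after time $\tau$; unscaling, $Q(s) = 0$ for $s \ge \|x\|\tau$. On the bounded interval $[0,\|x\|\tau]$ the Lipschitz bound (a) gives $\|Q(s)\| \le \|x\| + Ls$, so $\int_0^\infty \|Q(s)\|\,ds \le \|x\|\tau(\|x\| + \tfrac12 L\|x\|\tau)$, a bound depending only on $x$ (not on the particular path). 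Taking the supremum over $Q(\cdot) \in \Phi_x$ shows $V(x) < \infty$; the case $x = 0$ is trivial since the zero path is the only element of $\Phi_0$ by stability.

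For upper semicontinuity, fix $x$ and a sequence $x_n \to x$; I want to show $\limsup_n V(x_n) \le V(x)$. Passing to a subsequence, assume $V(x_n) \to \ell := \limsup_n V(x_n)$. By the remark after the statement, the supremum in \eqref{lyapfunc} is attained (using closedness (d)), so choose $Q_n(\cdot) \in \Phi_{x_n}$ with $\int_0^\infty \|Q_n(s)\|\,ds = V(x_n)$. The family $(Q_n)$ is uniformly Lipschitz with constant $L$ and has $Q_n(0) = x_n$ bounded, hence is equibounded on compact sets; by Arzelà--Ascoli, a further subsequence converges u.o.c.\ to some continuous $Q_*$, and by the closedness condition (d), $Q_* \in \Phi$, with $Q_*(0) = \lim x_n = x$, so $Q_* \in \Phi_x$. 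Moreover, the uniform stopping time argument from the well-definedness step applies uniformly: since $x_n \to x$, the lengths $\|x_n\|\tau$ are uniformly bounded by some $T$, so all $Q_n$ vanish on $[T,\infty)$, and hence so does $Q_*$. Therefore $\int_0^\infty \|Q_n(s)\|\,ds = \int_0^T \|Q_n(s)\|\,ds \to \int_0^T \|Q_*(s)\|\,ds = \int_0^\infty \|Q_*(s)\|\,ds \le V(x)$, where the convergence is just uniform convergence of the integrand on the fixed compact interval $[0,T]$. This gives $\ell \le V(x)$, as desired.

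The main obstacle — and the reason the statement gives only upper semicontinuity rather than continuity — is the direction of the limiting argument: one can extract a limiting path $Q_*$ from near-optimal paths at the $x_n$, but there is no mechanism to go the other way, i.e.\ to approximate an optimal (or near-optimal) path at $x$ by paths starting at $x_n$. This failure of lower semicontinuity of the map $x \mapsto \Phi_x$ in the relevant sense is exactly what motivates the subsequent introduction of the lower semicontinuity condition in the definition of a strict GFN model. A minor technical point to be careful about is uniformity of the vanishing time $T$: it relies on $\sup_n \|x_n\| < \infty$, which holds since $x_n$ converges, and on stability being a genuinely uniform property (a single $\tau$ for all of $\Phi(1)$), which is built into Definition~\ref{GFNstab}. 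With that in hand, truncating all integrals to the common compact interval $[0,T]$ reduces the whole argument to uniform convergence on a compact set, and the proof goes through cleanly.
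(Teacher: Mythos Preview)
Your proof is correct and follows essentially the same route as the paper: extract (via Arzel\`a--Ascoli and closedness~(d)) a u.o.c.-convergent subsequence of (near-)optimal paths $Q_n\in\Phi_{x_n}$, identify the limit as an admissible path in $\Phi_x$, and compare integrals. If anything, you are slightly more careful than the paper in making explicit the uniform truncation time $T=\sup_n\|x_n\|\tau$ needed to pass from u.o.c.\ convergence to convergence of the full integrals $\int_0^\infty\|Q_n(s)\|\,ds$; the paper's proof simply asserts this limit without isolating that step.
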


\begin{proof}
It is an easy consequence of Lipschitz continuity, scaling property and stability that $V(x)$ as defined in \eqref{lyapfunc} is finite. Let $x \in \R_+^K$ and $(x_n)_{n \in \N} \subset \R_+^K$ be a sequence that converges to $x$. As $\Phi$ is stable the set $\{ V(x_n): n \in \mathbb{N}\}$ is bounded. Hence there exists a subsequence $(x_{n_l})_{l\in \mathbb{N}}$  such that 
\begin{equation*}
\limsup_{n\rightarrow \infty} V(x_n) = \lim_{l\rightarrow \infty} V(x_{n_l})= \lim_{l\rightarrow \infty}  \int_0^{\infty} \| Q_{n_l}(s)\| ds
\end{equation*}
with $Q_{n_l}(0)=x_{n_l}$. Now, consider the family $\{ Q_{n_l}(\cdot) : l \in \mathbb{N} \,\}$. Since $\Phi$ is stable the family $\{ Q_{n_l}(\cdot) : l \in \mathbb{N} \,\}$ is bounded. By condition (a) in Definition~\ref{gfn} there is a single Lipschitz constant for any path $Q_{n_l}(\cdot)$ of the family $\{ Q_{n_l}(\cdot) : l \in \mathbb{N} \,\}$ and thus the family is equicontinuous. By the theorem of Arzel\`{a}-Ascoli there exists a subsequence which converges u.o.c. to some $Q_*(\cdot)$ with $Q_*(0)=x$. Since the model is closed it follows that $Q_*(\cdot) \in \Phi$. Hence by the definition of $V$ it holds that 
\begin{align*}
\limsup_{n \rightarrow \infty} V(x_n)  = \lim_{l \rightarrow \infty} \int_0^{\infty} \|Q_{n_l}(s)\|ds= \int_0^{\infty}\|Q_*(s)\|ds 
\leq V(x).
\end{align*}
This shows the assertion. 
\end{proof}

As we are interested in the continuity of $V$ the question remains whether
$V$ is also lower semicontinuous.

\begin{example}
Let $K=2$ and 
\begin{equation*}
 \Phi = \left\{  
\begin{pmatrix}
(x_1 - t)^+\\
(x_2 - t)^+\\
\end{pmatrix},\, 
\begin{pmatrix}
(c - \frac{1}{2}t)^+\\
(c - \frac{1}{2}t)^+\\
\end{pmatrix}: x_1,x_2, c \in \mathbb{R}_+ \right\}.
\end{equation*}
It is easy to check that $\Phi$ is a stable closed GFN model. We consider $x_0= (1 \, \,1)^T$ and $x_n=(1+ \frac{1}{n}\, \, 1- \frac{1}{n})^T$. It holds that
\begin{align*}
\lim_{n \rightarrow \infty} V(x_n) &=
 \lim_{n \rightarrow \infty} \int_0^{\infty} (1+\frac{1}{n} -t)^+ + (1-\frac{1}{n} -t)^+ dt\\
&= \lim_{n \rightarrow \infty}\frac{1}{2}\left( (1+\frac{1}{n})^2 + (1-\frac{1}{n})^2 \right) 
=1 \\&
< 2= \int_0^2 2(1-\frac{1}{2}t) dt=  V(x_0).
\end{align*}
So $V$ defined by (\ref{lyapfunc}) is not necessarily lower semicontinuous for stable closed GFN models.
\end{example}

\begin{remark}\label{rem1}
The example shows that in the frame of Definition~\ref{gfn} our candidate $V$ is not continuous in general. The problem with this example is that along the diagonal a particular solution exists which is not approximated by solutions starting close to but not on the diagonal. 
\end{remark}

The key property of a Lyapunov function $V$ for a dynamical system is, that $V$ is decreasing along trajectories. The trajectories in the context of closed GFN models are the paths. The next example addresses this problem. Here the concatenation of paths plays a key role. For this reason we provide a definition.
\begin{definition}
Let $\Phi$ be a closed GFN model and suppose that $Q_1(\cdot),Q_2(\cdot)$ are paths of $\Phi$ such that for some $t^* \geq 0$ it holds that $Q_1(t^*)=Q_2(0)$. Then $Q_1\diamond_{t^*}Q_2$ defined by 
\begin{equation*}
Q_1\diamond_{t^*}Q_2(t):= \begin{cases} 
Q_1(t)     &\quad \mbox{ for } \quad 0\leq t \leq t^*, \\
Q_2(t-t^*) &\quad \mbox{ for } \quad \quad \quad t \geq t^*
\end{cases}
\end{equation*}
is called the concatenation of $Q_1(\cdot)$ and $Q_2(\cdot)$ at $t^*$.
\end{definition}

In the previous definition note that if $Q_1(t^*)=Q_2(s)$ for an arbitrary
$s\geq0$, then because of the shift property we can consider the
concatenation of $Q_1(\cdot), Q_2(s+\cdot)$. In this sense evaluation of
$Q_2$ at $0$ in the definition poses no restriction.

\begin{example}
\label{example2}
 Let $K=2$ and define for given $x_1,x_2 \in \mathbb{R}_+$ the paths
\begin{equation*}
Q_1 (t)= \begin{cases}
\begin{pmatrix} x_1 -t \\ x_2 +t\end{pmatrix}  & \quad  \text{if } \quad 0\leq t \leq x_1, \\
\begin{pmatrix} 0 \\ x_1 + x_2 -t\end{pmatrix}^+ & \quad  \text{if } \quad t \geq x_1.
\end{cases}
\end{equation*}
and
\begin{equation*}
Q_2(t)= \begin{cases} 
\begin{pmatrix} x_1 + t\\ x_2 -t \end{pmatrix}  & \quad  \text{if } \quad t \leq x_2, \\
\begin{pmatrix} x_1 + x_2 -t \\  0 \end{pmatrix}^+  & \quad  \text{if }\quad t \geq x_2.
\end{cases}
\end{equation*}
Then consider the stable closed GFN model
\begin{equation*}
\Phi = \left\{ Q_1(\cdot), Q_2(\cdot)\, :\, x_1,x_2 \in \mathbb{R}_+ \right\}.
\end{equation*}
In this GFN model it is obvious that paths cannot be concatenated. However, let us assume that $V$ is a state-dependent Lyapunov function which is decaying along paths. The closed GFN model $\Phi$ has the following property. For every state $z=(z_1,z_2)$ there is a state $y=(y_1,y_2)$ such that there two paths that go to zero, where one path starts in $z$ and passes through $y$ and the other path starts in $y$ and passes through $z$. As $V$ is decaying along paths it follows that
\begin{align*}
 V(z) < V(y) \quad \text{ and } \quad V(y)<V(z),
\end{align*}
which is a contradiction.
\end{example}

    Example~\ref{example2} shows that in the framework of
    Definition~\ref{gfn} there are GFNs that are stable and for which no
    Lyapunov function in the sense of Definition~\ref{def:Lyapdef} can be
    defined.
    It will thus be the aim of the following section to identify
    situations where this is possible.

\section{A Converse Lyapunov Theorem}\label{results}

In this section we present a way out of the dilemma. We 
restrict the class of closed GFN models by adding two conditions, namely a concatenation property and a lower semicontinuity property. 
Fluid models with these properties are called strict GFN model. 
The
main result of this section is that the Lyapunov function candidate
\eqref{lyapfunc} is appropriate to prove a converse Lyapunov theorem for
the class of strict GFN models. The road map is as follows. First we
present the two additional conditions for the closed GFN model. 
After that we show that
under this conditions the candidate \eqref{lyapfunc} is continuous. In the
sequel we prove the main theorem. Similar to the closed GFN model we
introduce the following notations $\Q(1) = \{ Q(\cdot) \in \Q:
\|Q(0)\|=1\}$ and $\Q_x = \{ Q (\cdot)\in \Q: Q(0)=x\}$ for $x \in
\R_+^K$.

\begin{definition}\label{mGFN}
A set $\mathcal{Q}$ of functions $Q(\cdot) :\R_+ \rightarrow \R_+^K$ is  a \emph{strict GFN model}, if
\begin{enumerate}
\item[(\textit{a}')] it is a closed GFN model
\item[(e)] for GFN paths ${Q_1}(\cdot),{Q_2}(\cdot) \in \mathcal{Q}$ with
  $Q_1(t^*)=Q_2(0)$ for some $t^*\in \R_+$, the concatenation
  $Q_1\diamond_{t^*}Q_2(\cdot)$ is also a path of $\mathcal{Q}$.
\item[(f)] there is a $T>0$ such that the set-valued map $x \rightsquigarrow \Q_{x} \big\vert_{[0,T]}$ is lower semicontinuous. 
\end{enumerate}
\end{definition}
It is possible that a closed GFN model satisfies (e) and not (f). We do
not introduce yet another name for such GFN models but simply speak of a
closed GFN model satisfying (e).

\begin{remark}
    \label{semicontremark}
    We note for further reference, that for GFN models satisfying (e) the
    semicontinuity condition (f) can be stated equivalently as
    \begin{enumerate}
      \item[(f')] for each $x_0$ so that $\Q_{x_0}\neq \emptyset$ there
        exists a $T(x_0)>0$ such that the set-valued map $x
        \rightsquigarrow \Q_{x} \big\vert_{[0,T(x_0)]}$ is lower
        semicontinuous at $x_0$.
    \end{enumerate}
    It is clear that (f) implies (f'). Conversely, note first that the
    uniform Lipschitz constant guaranteed by Definition~\ref{gfn}~(a)
    implies that if a sequence of paths $(Q_n)_{n \in \N}$ converges
    u.o.c. on an interval $[0,T_1), T_1<\infty$, then the sequence
    converges uniformly on the closed interval $[0,T_1]$. Now fix any
    $T>0$, $x_0$ and a $T_0:=T(x_0)$ such that (f') holds. Choose
    $Q(\cdot) \in\Q_{x_0}$ and a sequence $(x_n)_{n \in \mathbb{N}}$ converging to $x_0$. We have to
    construct a sequence $Q_n(\cdot)\in \Q_{x_n}$ such that $Q_n(\cdot)\to
    Q(\cdot)$ uniformly on $[0,T]$. We may assume that $T_0 < T$ as
    otherwise there is nothing to show. By assumption there exist
    $Q^1_n(\cdot) \in \Q_{x_n}$ such that $Q_n^1(\cdot)\to Q(\cdot)$
    uniformly on $[0,T_0]$. In particular, $Q_n^1(T_0)\to Q(T_0)$.  By the
    shift property $Q(T_0+\cdot)\in \Q_{Q(T_0)}$ and so for $T_1:=
    T(Q(T_0))$ we may by (f') choose a sequence $\tilde{Q}_n^1(\cdot) \in
    \Q_{Q_n^1(T_0)}$ such that $\tilde{Q}_n^1(\cdot)\to Q(T_0+\cdot)$
    uniformly on $[0,T_1]$.

    Now define the concatenation $Q^2_n:=
    Q_n^1\diamond_{T_0}\tilde{Q}_n^1(\cdot)$ and note that
    $Q_n^2(\cdot)\to Q(\cdot)$ uniformly on $[0,T_0+T_1]$. Repeating this
    step countably often, we can construct an open interval $[0,\bar{T})$
    such that there exist $\bar{Q}_n(\cdot)\in\Q_{x_n}$ such that
    $Q_n(\cdot)\to Q(\cdot)$ u.o.c. on $[0,\bar{T})$. Assume that
    $\bar{T}<\infty$ is chosen as the maximal real for which this
    u.o.c. convergence is possible.

    Then by our first remark $Q_n(\cdot) \to Q(\cdot)$ uniformly on
    $[0,\bar{T}]$. Then we can repeat the argument and extend the uniform
    convergence to the interval $[0,\bar{T}+T(Q(\bar{T}))]$. This
    contradicts the assumption that $\bar{T}$ was chosen to be
    maximal. This shows the equivalence, as $\bar{T}$ can be arbitrarily
    large and so chosen to be bigger than $T$. 
\end{remark}

We have seen that the absence to certain concatenations is an
impediment to the existence of Lyapunov function in Example~\ref{example2}.
Next we show that conditions (e) and (f) close the
gap from upper semicontinuity to continuity.

\begin{proposition}\label{Vcont}
If $\mathcal{Q}$ is a stable strict GFN model, then $V$ defined in
\eqref{lyapfunc} is continuous.
\end{proposition}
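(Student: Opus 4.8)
We already know from Proposition~\ref{usc} that $V$ is upper semicontinuous on $\A(\Q)=\R_+^K$ (a stable strict GFN model is in particular a stable closed GFN model), so it remains to prove lower semicontinuity: given $x_0\in\R_+^K$ and a sequence $x_n\to x_0$, we must show $\liminf_{n\to\infty}V(x_n)\ge V(x_0)$. Since $\Q$ is stable and closed, the supremum in \eqref{lyapfunc} defining $V(x_0)$ is attained, say by a path $Q(\cdot)\in\Q_{x_0}$ with $V(x_0)=\int_0^\infty\|Q(s)\|\,ds$. The plan is to use the lower semicontinuity property (f) — in the strong, concatenated form (f') $\Rightarrow$ any-horizon form established in Remark~\ref{semicontremark} — to produce paths $Q_n(\cdot)\in\Q_{x_n}$ with $Q_n(\cdot)\to Q(\cdot)$ uniformly on compact sets, and then to pass to the limit under the integral.

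First I would fix $\e>0$. By stability and the scaling property, paths starting in a bounded neighbourhood of $x_0$ reach $0$ and stay there after a time that is bounded uniformly on that neighbourhood (as in the proof of Proposition~\ref{usc}); combined with the uniform Lipschitz bound (a), this gives a single $T=T(\e)<\infty$ and a neighbourhood of $x_0$ such that for every path starting in that neighbourhood the tail contribution $\int_T^\infty\|\cdot\|\,ds$ is at most $\e$, and likewise $\int_T^\infty\|Q(s)\|\,ds\le\e$. Thus $V(x_0)\le\int_0^T\|Q(s)\|\,ds+\e$. Next, by Remark~\ref{semicontremark} the set-valued map $x\rightsquigarrow\Q_x\big\vert_{[0,T]}$ is lower semicontinuous, so there exist $Q_n(\cdot)\in\Q_{x_n}$ with $Q_n\to Q$ uniformly on $[0,T]$. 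Consequently $\int_0^T\|Q_n(s)\|\,ds\to\int_0^T\|Q(s)\|\,ds$, and since $Q_n(\cdot)\in\Q_{x_n}$ we have $V(x_n)\ge\int_0^T\|Q_n(s)\|\,ds$. Putting the pieces together,
\[
\liminf_{n\to\infty}V(x_n)\ \ge\ \lim_{n\to\infty}\int_0^T\|Q_n(s)\|\,ds\ =\ \int_0^T\|Q(s)\|\,ds\ \ge\ V(x_0)-\e .
\]
Letting $\e\to0$ yields $\liminf_{n\to\infty}V(x_n)\ge V(x_0)$, which together with upper semicontinuity from Proposition~\ref{usc} gives continuity of $V$ at $x_0$; as $x_0$ was arbitrary, $V$ is continuous.

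The step I expect to require the most care is the uniform tail estimate: one needs that the drain-to-zero time $\tau(x)$ is bounded on a neighbourhood of $x_0$, not just finite pointwise. This follows because stability gives a uniform drain time $\tau$ on $\Phi(1)$, and the scaling property $\frac1q Q(q\cdot)\in\Q(1)$ transfers it to $\tau(x)\le\|x\|\,\tau$, which is locally bounded; but it must be checked that this bound, together with $\|Q(s)\|\le\|x\|+Ls$ and $\|Q(s)\|=0$ for $s\ge\|x\|\tau$, indeed makes $\int_T^\infty\|Q(s)\|\,ds$ uniformly small as $T\to\infty$ over the relevant family of paths — which it does, since these integrals are in fact eventually zero for $T$ large. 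The lower semicontinuity input (f)/(f') and the u.o.c.\ bootstrapping of Remark~\ref{semicontremark} do the rest of the work; note that property (e) enters only indirectly, through its use in establishing the equivalence in Remark~\ref{semicontremark}.
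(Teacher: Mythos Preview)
Your proof is correct and follows essentially the same route as the paper: pick the maximizing path $Q_*(\cdot)\in\Q_{x_0}$, use the lower semicontinuity of $x\rightsquigarrow\Q_x$ (extended beyond the fixed horizon of (f) via concatenation) to obtain approximating paths $Q_n(\cdot)\in\Q_{x_n}$ converging to $Q_*(\cdot)$ uniformly on compacts, and then exploit stability to control the tails and pass to the limit in the integral. The only cosmetic differences are that you invoke Remark~\ref{semicontremark} to get convergence on $[0,T]$ for arbitrary $T$, whereas the paper redoes that concatenation-and-iteration argument inline; and you make the tail estimate explicit via an $\varepsilon$-argument, whereas the paper simply builds full u.o.c.\ convergence and then appeals to ``the same arguments as in the proof of Proposition~\ref{usc}'' to justify $\lim_n\int_0^\infty\|Q_n\|=\int_0^\infty\|Q_*\|$.
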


\begin{proof}
We show that $V$ is lower semicontinuous as the continuity of $V$ 
then follows
together with 
Proposition~\ref{usc}. Let $x_*\in \R_+^K$ and $Q_*(\cdot) \in \Q_{x_*}$ be such that
\[V(x_*)= \int_0^{\infty} \|Q_*(s)\|ds.\]
Further let $(x_n)_{n \in \N}$ be a sequence that converges to $x_*$. By
condition (f) in Definition~\ref{mGFN} there exists a $T>0$ and a sequence
$\left(Q_n(\cdot)\big\vert_{[0,T]}\right)_{n \in \N}$ in $\Q_{x_n}
\big\vert_{[0,T]}$ that converges 
uniformly to $Q_*(\cdot)\big\vert_{[0,T]}$.
In particular, $x^1_n:= Q_{n}(T) \big\vert_{[0,T]}$ converges to
$x^1:= Q_{*}(T) \big\vert_{[0,T]}$ as $n \rightarrow \infty$. Moreover,
for $Q^1_{*}(\cdot) \big\vert_{[0,T]} \in \Q_{x^1}\big\vert_{[0,T]}$ such
that $Q^1_{*}(\cdot) \big\vert_{[0,T]} =Q_{*}(\cdot) \big\vert_{[T,2T]}$
condition~(f) yields the existence of a sequence $Q^1_n(\cdot) \in
\Q_{x^1_n} \big\vert_{[0,T]}$ satisfying
\begin{equation*}
\lim_{n \rightarrow \infty} Q^1_{n}(\cdot) \big\vert_{[0,T]} = Q^1_{*}(\cdot) \big\vert_{[0,T]}\quad\mbox{ uniformly\,. } 
\end{equation*}
Using the concatenation property~(e) we have a sequence $(Q_n(\cdot)|_{[0,2T]})_{n \in \N} \in \Q_{x_n}|_{[0,2T]}$ that converges u.o.c. to $Q_*(\cdot)|_{[0,2T]} \in \Q_{x_*}|_{[0,2T]}$.
A successive continuation in this manner yields the existence of a sequence $Q_n(\cdot) \in \Q_{x_n}$ that converges u.o.c. to $Q_*(\cdot) \in \Q_{x_*}$. 
As $\mathcal{Q}$ is stable and using the same arguments as in the proof of Proposition~\ref{usc} we have
\begin{align*}
V(x_*) = \int_0^{\infty} \| Q_*(s)\| ds  = \lim_{n \rightarrow \infty} \int_0^{\infty} \| Q_n(s)\| ds  
\leq\liminf_{n \rightarrow \infty} V(x_n).
\end{align*}
That is, $V$ is lower semicontinuous.
\end{proof}

Now we state the main theorem.

\begin{theorem}
\label{FLtheo}
A strict GFN model $\mathcal{Q}$ is stable if and only if it admits a Lyapunov function. In particular, $V$ can be chosen as
\begin{equation*}
V(x) = \sup_{ Q (\cdot) \in \mathcal{Q}_x}  \int_0^{\infty} \| Q(s)\| ds
\end{equation*}
and $V$ is continuous.
\end{theorem}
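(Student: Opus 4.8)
The plan is to prove the two implications separately. For the easy direction, suppose $\mathcal{Q}$ admits a Lyapunov function $V$ in the sense of Definition~\ref{def:Lyapdef}. Take any path $Q(\cdot)\in\mathcal{Q}(1)$; applying \eqref{fLF2} with $s=0$ gives $w_1(\|Q(t)\|)\le V(Q(t))\le V(Q(0))-\int_0^t w_3(\|Q(r)\|)\,dr\le w_2(1)$. If $\|Q(t)\|$ did not decay to zero, there would be some $\eta>0$ and arbitrarily large times with $\|Q(t)\|\ge\eta$; using the uniform Lipschitz constant $L$ from Definition~\ref{gfn}~(a), the norm stays above $\eta/2$ on intervals of length bounded below, forcing $\int_0^\infty w_3(\|Q(r)\|)\,dr=\infty$, contradicting the upper bound $w_2(1)$. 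Hence $\|Q(t)\|\to 0$, i.e. the zero path is attractive; stability in the Lyapunov sense (and hence, by Lemma~\ref{stabdefequiv}, stability of $\mathcal{Q}$) follows by the standard argument, using that $V(Q(t))$ is nonincreasing together with the comparison bounds. (Alternatively one can invoke Stolyar's criterion $\inf_{t\ge0}\|Q(t)\|\le 1$ as in the proof of Lemma~\ref{stabdefequiv}.)

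For the converse, assume $\mathcal{Q}$ is stable and define $V$ by \eqref{lyapfunc} with the supremum over $\mathcal{Q}_x$. By Proposition~\ref{usc} (applied with $\mathcal Q$ in place of $\Phi$) $V$ is well defined, finite, and upper semicontinuous, and by Proposition~\ref{Vcont} it is continuous; moreover for a stable closed GFN model the supremum is attained. It remains to verify \eqref{fLF1} and \eqref{fLF2}. The lower bound in \eqref{fLF1}: given $x$, pick any $Q(\cdot)\in\mathcal{Q}_x$; then $V(x)\ge\int_0^\infty\|Q(s)\|\,ds\ge\int_0^{\|x\|/(2L)}(\|x\|-Ls)\,ds = \tfrac{3}{8L}\|x\|^2$ using Lipschitz continuity and $Q(0)=x$, so $w_1(r):=\tfrac{3}{8L}r^2\in\mathcal K$ works. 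The upper bound: by stability and scaling, for $Q(\cdot)\in\mathcal{Q}_x$ one has $Q(s)\equiv0$ for $s\ge\|x\|\tau$ and $\|Q(s)\|\le\|x\|+Ls$ for all $s$, so $\int_0^\infty\|Q(s)\|\,ds\le\int_0^{\|x\|\tau}(\|x\|+Ls)\,ds$, a polynomial in $\|x\|$ that gives a suitable $w_2\in\mathcal K$; taking the supremum over $Q$ keeps the bound.

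The heart of the argument is \eqref{fLF2}, and this is where the concatenation property~(e) is essential. Fix a path $Q(\cdot)\in\mathcal{Q}$ and $0\le s\le t$. Write $x:=Q(s)$, $y:=Q(t)$. Let $\widetilde Q(\cdot)\in\mathcal{Q}_{y}$ be a path attaining $V(y)=\int_0^\infty\|\widetilde Q(u)\|\,du$. The shift property gives that $Q(s+\cdot)\in\mathcal{Q}_x$ and $Q(s+\cdot)$ restricted to $[0,t-s]$ ends at $y$; by the concatenation property~(e), the path $P(\cdot)$ that follows $Q(s+\cdot)$ on $[0,t-s]$ and then switches to $\widetilde Q(\cdot-(t-s))$ lies in $\mathcal{Q}_x$. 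Therefore
\begin{align*}
V(x) \;\ge\; \int_0^\infty\|P(u)\|\,du
\;=\; \int_0^{t-s}\|Q(s+u)\|\,du + \int_0^\infty\|\widetilde Q(u)\|\,du
\;=\; \int_s^t\|Q(r)\|\,dr + V(y).
\end{align*}
Rearranged, this yields $V(Q(t))-V(Q(s)) \le -\int_s^t\|Q(r)\|\,dr$, so \eqref{fLF2} holds with $w_3(r):=r\in\mathcal K$. Combining the two lower and upper bounds with this inequality shows $V$ is a Lyapunov function; continuity was already established. The main obstacle, and the only place real structure beyond Proposition~\ref{usc} is used, is exactly the concatenation step: without~(e) the glued path $P(\cdot)$ need not be in $\mathcal{Q}$, and indeed Example~\ref{example2} shows the conclusion can fail; property~(f) is what upgrades upper semicontinuity to the full continuity claimed in the statement.
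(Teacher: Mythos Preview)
Your proposal is correct and follows essentially the same approach as the paper: both use the candidate $V(x)=\sup_{Q\in\mathcal Q_x}\int_0^\infty\|Q(s)\|\,ds$, invoke Propositions~\ref{usc} and~\ref{Vcont} for continuity, derive the bounds \eqref{fLF1} from Lipschitz continuity and the scaling/stability hypotheses, and obtain the decrease condition \eqref{fLF2} via concatenation exactly as you do. The only noticeable variation is in the sufficiency direction: the paper argues that $V(Q(t))$ is monotone decreasing with limit $c\ge0$ and rules out $c>0$ by observing that then $\|Q(t)\|\ge w_2^{-1}(c)$ for all $t$, whereas you bypass the monotone limit and instead use the Lipschitz constant to produce disjoint intervals on which $\|Q(\cdot)\|\ge\eta/2$; both are standard and lead to the same conclusion via Lemma~\ref{stabdefequiv}.
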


\begin{proof}
First we show that the existence of a Lyapunov function is sufficient for stability. Let $V$ be a Lyapunov function for $\mathcal{Q}$. From (\ref{fLF1}) it follows that $V(Q(t)) \geq 0$ and inequality (\ref{fLF2}) implies that
\begin{equation*}
 \qquad V(Q(t_2)) - V(Q(t_1)) \leq 0
\end{equation*}
for all $t_1 \leq t_2 \in \R_+$. So $V(Q(\cdot))$ is monotone decreasing and bounded. In order to show that $V(Q(t))$ tends to zero as $t$ goes to infinity assume that
\begin{equation*}
\lim_{t \rightarrow \infty} V(Q(t)) = : c >0.
\end{equation*}
Then for all $t\geq 0$ it holds that
\begin{equation}\label{+}
0 < c \leq V(Q(t)) \leq w_2(\|Q(t)\|)
\end{equation}
and further $0 < w_2^{-1}(c) \leq \|Q(t)\|$. It also holds that
 $$0 < w_3(w_2^{-1}(c)) \leq w_3(\|Q(t)\|).$$ 
Now observe that from (\ref{fLF2}) it follows that
\begin{align*}
V(Q(t)) - V(Q(0)) &\leq -\,\int_0^t w_3(\|Q(s)\|) ds  \leq -\int_0^t   w_3(w_2^{-1}(c)) ds  \leq -\, w_3(w_2^{-1}(c))\,t 
\end{align*}
and hence $ \lim_{t \rightarrow \infty} V(Q(t)) = - \infty$, which is a contradiction to (\ref{+}). Consequently
\begin{align}\label{firststep}
\lim_{t \rightarrow \infty} V(Q(t)) = 0.
\end{align}
By (\ref{fLF1}) it follows that
\begin{align*}
 \lim_{t \rightarrow \infty} \|Q(t)\| = 0.
\end{align*} 
So the zero path is asymptotically stable and this implies by Lemma~\ref{stabdefequiv} the stability of the strict GFN model $\Q$.


Conversely suppose that $\mathcal{Q}$ is stable. Then there is a $\tau >0$ such that $Q(\tau + \cdot) \equiv 0$ for all paths $Q(\cdot) \in \mathcal{Q}(1)$. We define the following comparison functions
\begin{align*}
w_1(r) := \frac{r^2}{2L}, \quad  w_2(r) := r^2\,(1 + L\tau)\, \tau ,\quad w_3(r) := r
\end{align*}
and show that our candidate 
\begin{equation*}
V(x) = \sup_{ Q(\cdot) \in \mathcal{Q}_x}  \int_0^{\infty} \| Q(s)\| ds
\end{equation*}
is a Lyapunov function. As $\mathcal{Q}$ satisfies the Lipschitz condition (a) it follows that
\begin{align} \label{lip2}
 \| Q(s)\| \geq \| Q(t)\| \,- L( s-t)
\end{align}
for all $Q \in \Q$ and  $s \geq t$. In particular for $t=0$ 
this implies 
\begin{align} \label{lip2b}
 \|Q(s)\| \geq \| Q(0)\| \,- L s .
\end{align}
Using the last inequality we get the following estimate from below
\begin{align*}
V(x) &= \sup_{ Q(\cdot) \in \mathcal{Q}_{x}} \int_0^{\infty} \|Q(s)\| \,ds \, 
\geq \sup_{ Q (\cdot) \in \mathcal{Q}_{x}} \int_0^{\frac{\|x\|}{L}} \| Q(s)\| \,ds \, \,\\ &\geq \, \,
 \sup_{ Q(\cdot) \in \mathcal{Q}_{x}} \int_0^{\frac{\|x\|}{L}} \left(\|x\|\,-\,Ls \,\right)\,\,ds \\&
=\sup_{ Q (\cdot)\in \mathcal{Q}_{x}} \left\{ \|x\| \frac{\|x\|}{L} - \frac{\|x\|^2}{2L}  \right\}  
=   \frac{\|x\|^2}{2L} = w_1(\|x\|).
\end{align*}
To obtain an estimate from above consider $ Q(\cdot) \in \mathcal{Q}_{x}$. Note that by the scaling property it follows that
$
\frac{1}{\|x\|}\,  Q(\|x\| \cdot) \in \mathcal{Q}(1)
$ 
and further the stability of $\mathcal{Q}$ implies that
\begin{equation}\label{ss}
 Q(s) =0 \qquad \forall \, s \geq \|x\|\tau.
\end{equation}
The triangle inequality together with the Lipschitz condition imply that for all $s \in [0,\|x\|\tau]$ it holds that
\begin{align}\label{lip1}
\| Q(s)\| \leq \| Q(0)\| \, + L\|x\|\tau = \|x\| \,(1 + L\tau).
\end{align}
With (\ref{ss}) and (\ref{lip1}) an estimate from above is derived as follows
\begin{align*}
V(x) = \sup_{ Q(\cdot) \in \mathcal{Q}_{x}} \int_0^{\|x\|\tau} \|Q(s)\| \,ds 
&\leq \, \sup_{ Q(\cdot) \in \mathcal{Q}_{x}} \int_0^{\|x\|\tau} \|x\|\,(1 + L\, \tau) \,ds  \\
&= \|x\|^2\,(1 + L\tau)\, \tau 
=  w_2(\|x\|).
\end{align*}
Now consider the decrease condition
\begin{align*}
V(Q(t_2)) - V(Q(t_1)) = 
\sup_{ Q(\cdot) \in \mathcal{Q}_{Q(t_2)}} \int_0^{\infty} \| Q(s)\| \,ds -\sup_{ Q (\cdot)\in \mathcal{Q}_{Q(t_1)}} \int_0^{\infty} \| Q(s)\| \,ds.
\end{align*}
From condition (e) it follows that
\begin{align*}
V(Q(t_1)) &= \sup_{Q (\cdot)\in \mathcal{Q}_{Q(t_1)}} \int_0^{\infty} \| Q(s)\| \,ds
 \\&\geq   \int_{t_1}^{t_2} \| Q(s)\| \,ds + \sup_{ Q (\cdot)\in \mathcal{Q}_{Q(t_2)}} \int_0^{\infty} \| Q(s)\|\,ds\\
&=  \int_{t_1}^{t_2} \| Q(s)\| \,ds + V(Q(t_2)).
\end{align*}
and hence
\begin{align*}
V(Q(t_2)) - V(Q(t_1)) \leq -\,\int_{t_1}^{t_2} \|Q(s)\|\,ds =  -\,\int_{t_1}^{t_2} w_3(\|Q(s)\|)\,ds.
\end{align*}
Thus together with Proposition~\ref{Vcont} we see that $V$ is a Lyapunov function.

\end{proof}

From the proof of the previous theorem we see that the semicontinuity
property (f) is only needed to conclude continuity of $V$. Thus we have
also proved

\begin{corollary}
A closed GFN model $\Phi$ that satisfies the concatenation property (e) is stable if and only if it admits a Lyapunov function. In particular $V$ can be chosen as in \eqref{lyapfunc} and $V$ is upper semicontinuous.
\end{corollary}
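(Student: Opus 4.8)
The plan is to extract exactly what is needed from the proof of Theorem~\ref{FLtheo} and discard the part that relies on property~(f). Recall that the statement to be proved is the corollary: for a closed GFN model $\Phi$ satisfying the concatenation property~(e), stability is equivalent to the existence of a Lyapunov function, with $V$ given by \eqref{lyapfunc} and upper semicontinuous.

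First I would handle the direction ``Lyapunov function $\Rightarrow$ stable''. This direction is already proved in the first half of the proof of Theorem~\ref{FLtheo} and uses only \eqref{fLF1} and \eqref{fLF2} together with Lemma~\ref{stabdefequiv}; it never invokes the strict GFN properties~(e) or~(f). So I would simply say that this implication is verbatim the argument given in Theorem~\ref{FLtheo}: monotonicity of $t\mapsto V(Q(t))$ from \eqref{fLF2}, ruling out a positive limit $c>0$ by the linear-in-$t$ decay estimate, concluding $V(Q(t))\to 0$, hence $\|Q(t)\|\to 0$ by \eqref{fLF1}, hence asymptotic stability of the zero path, hence stability by Lemma~\ref{stabdefequiv}.

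For the converse, assume $\Phi$ is stable, so there is $\tau>0$ with $Q(\tau+\cdot)\equiv 0$ for all $Q\in\Phi(1)$. I would take the same comparison functions $w_1(r)=r^2/(2L)$, $w_2(r)=r^2(1+L\tau)\tau$, $w_3(r)=r$ and show $V$ from \eqref{lyapfunc} is a Lyapunov function. The lower bound $V(x)\ge w_1(\|x\|)$ follows from the Lipschitz estimate \eqref{lip2b} exactly as in Theorem~\ref{FLtheo}; the upper bound $V(x)\le w_2(\|x\|)$ follows from the scaling property, stability (so $Q(s)=0$ for $s\ge\|x\|\tau$), and the triangle inequality as in \eqref{lip1}. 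The decrease condition \eqref{fLF2} is where property~(e) enters and is the one essential ingredient retained: given $Q(\cdot)\in\Phi$ and $t_1\le t_2$, any path in $\Phi_{Q(t_2)}$ can, by the shift property, be viewed as $Q(t_2+\cdot)$ up to shifting and then concatenated to $Q(\cdot)\vert_{[0,t_2]}$ at $t_2$; property~(e) guarantees this concatenation lies in $\Phi$ and starts at $Q(t_1)$, giving
\[
V(Q(t_1)) \;\ge\; \int_{t_1}^{t_2}\|Q(s)\|\,ds \;+\; \sup_{\tilde Q\in\Phi_{Q(t_2)}}\int_0^\infty \|\tilde Q(s)\|\,ds \;=\; \int_{t_1}^{t_2} w_3(\|Q(s)\|)\,ds + V(Q(t_2)),
\]
which is precisely \eqref{fLF2}. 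Finally, upper semicontinuity of $V$ is Proposition~\ref{usc}, which requires only that $\Phi$ be a stable closed GFN model — no concatenation or semicontinuity assumption is used there. Assembling these pieces gives the corollary.

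I do not expect a genuine obstacle here, since every step is already present in the excerpt; the only care needed is bookkeeping — verifying that the ``stable $\Rightarrow$ Lyapunov function'' argument in Theorem~\ref{FLtheo} uses property~(f) nowhere except through the appeal to Proposition~\ref{Vcont} for continuity, and that dropping~(f) costs exactly the downgrade from continuity to upper semicontinuity (via Proposition~\ref{usc}) while leaving \eqref{fLF1}, \eqref{fLF2}, and the stability equivalence intact. Thus the corollary is really just the observation that property~(f) was a luxury used solely for the regularity conclusion.
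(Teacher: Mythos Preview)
Your proposal is correct and matches the paper's approach exactly: the paper simply remarks that in the proof of Theorem~\ref{FLtheo} property~(f) is invoked only through Proposition~\ref{Vcont} to obtain continuity of $V$, so dropping~(f) leaves everything intact except that one must fall back on Proposition~\ref{usc} for upper semicontinuity. Your careful bookkeeping of which ingredients are used where is precisely the content of this observation.
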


\begin{remark}
Since upper semicontinuous Lyapunov function do not 
imply robustness statements the benefit of Lyapunov functions that are upper semicontinuous is restricted compared to continuous Lyapunov functions, see \cite{kelletteel04}, \cite{teel2000smooth}.
\end{remark}

\section{Fluid networks as differential inclusions}
\label{di-elements}

We want to apply the main theorem to fluid network models that work under a specific discipline. So we 
need to show that the additional conditions (e) and (f) are satisfied in each case. In order to 
obtain condition (e) we make use of concepts 
from the theory of differential inclusions. Clearly a detailed description of the dynamics of a fluid network depends on the specific discipline that is used. But one part of the dynamics of fluid network models that 
all service disciplines have 
in common 
is the so called flow balance relation
\begin{align}\label{be}
 Q (t) = Q(0) +  \alpha t - (I-P^T)\, M  T(t).
\end{align}
Here $\alpha \in \R_+^K$ represents the inflow rate, $\mu \in \R_+^K$ denotes the outflow rate, $M=\text{diag}(\mu)$ and $P$ is the routing matrix. The initial value or level of the fluid network is given by $Q(0)=x$. A basic property of the fluid level process $Q$ as well as the allocation process $T(t)$ is that both processes are Lipschitz continuous \cite{chen} and hence differentiable almost everywhere. So for almost all $t\in \mathbb{R}_+$ the flow balance relation \eqref{be} can also be written as
\begin{align}\label{diffbalance}
 \dot Q (t) =  \alpha - (I-P^T)\, M \dot T(t), \qquad Q(0)=x.
\end{align}
Now we consider the derivative of the allocation process as the control
variable, i.e. we define $u(t):=\dot T(t)$ a.e. . Note that $u$ is
measurable. The allocation process is determined through the service
discipline. So each service discipline has a set of admissible controls
$U(Q)$, where $u \in U(Q)$ if and only if $u \in \R_+^K$ satisfies some
allocation conditions that are specific to the discipline. As mentioned in
\cite{chen} the allocation process need not be unique and so for every
$Q\in \mathbb{R}_+^K$ there are different choices of $u$ possible. But
the admissible control values $u$ depend on the fluid level process $Q(t)$
through the allocation conditions. Consequently we consider the set of
admissible control values as a set $U(Q).$ Thus, the flow balance
relation \eqref{diffbalance} can also be expressed by a differential
inclusion of the form
\begin{align*}
 \dot Q (t) = \alpha - (I-P^T)\, M u(t) =: f(Q(t),u(t)), \quad Q(0)=Q_0
\end{align*}
with $u(t) \in U(Q(t))$. Often $U$ is referred to as the feedback map. By setting
\begin{align}\label{diffinclGdef}
 F(Q) = \{f(Q,u) \, : \, u \in U(Q)\} 
\end{align}
we rewrite this as a closed loop differential inclusion
\begin{align}\label{diffinclGcloop}
\dot Q(t) \in F(Q(t)), \quad Q(0)=Q_0.
\end{align}

In the following we state some results from the theory of differential inclusions that will be useful to show that specific fluid networks satisfy the conditions (e) and (f). 
Let $K\subset \R^n$ and consider the differential inclusion 
\begin{align}\label{diffinclF}
\dot x(t) \in F(x(t)).
\end{align}
Let $\mathcal{S}_F(x_0)$ denote the set of solutions to \eqref{diffinclF} starting at $x_0 \in K$. The existence theorem is as follows \cite[Theorem 5.2]{smirnov}.
\begin{theorem}\label{exists}
Let $K \subset \R^n$ be a closed set. Assume that the set-valued map $F: K \rightsquigarrow \R^n$ with closed convex values contained in a ball of radius $b>0$ is upper semicontinuous. Then the following conditions are equivalent.
\begin{enumerate}
\item[(1)] For any $x_0 \in K$ there is a solution $x(\cdot) \in \mathcal{S}_F(x_0)$ satisfying $x(t) \in K$ for all $t\geq 0$.
\item[(2)] For any $x \in K$ it holds that $F(x) \cap \mathcal{T}_K(x)\not = \emptyset$.
\end{enumerate}
\end{theorem}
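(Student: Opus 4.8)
\textbf{Proof plan for Theorem~\ref{exists} (Nagumo-type viability theorem).}
The plan is to recognize this as the classical Nagumo viability theorem for differential inclusions and to follow the standard two-directional argument, using the existence theory for upper semicontinuous convex-valued right-hand sides that is available on $\R^n$ (e.g.\ via a standard selection/approximation scheme or the Filippov--Wa\.zewski framework). The hypotheses — $F$ upper semicontinuous with nonempty compact convex values uniformly bounded by $b$, and $K$ closed — are exactly those under which local solutions of \eqref{diffinclF} exist and can be continued as long as they stay in $\R^n$; boundedness by $b$ gives uniform Lipschitz bounds on solutions and hence no finite blow-up.

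First I would prove (1)$\Rightarrow$(2), the easy direction. Fix $x\in K$ and let $x(\cdot)\in\mathcal S_F(x_0)$ with $x_0=x$ be a viable solution, so $x(t)\in K$ for all $t\ge 0$ and $x(0)=x$. Since $x(\cdot)$ is Lipschitz with constant $b$, the difference quotients $\tfrac{1}{h}(x(h)-x)$ stay in a compact set as $h\downarrow 0$; pick a sequence $h_k\downarrow 0$ along which they converge to some $v\in\R^n$. On one hand $\tfrac{1}{h_k}(x(h_k)-x)$ is a difference quotient of points of $K$ based at $x$, so $v\in\mathcal T_K(x)$ (the contingent/Bouligand tangent cone). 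On the other hand, writing $x(h_k)-x=\int_0^{h_k}\dot x(s)\,ds$ with $\dot x(s)\in F(x(s))$ a.e., and using upper semicontinuity of $F$ together with convexity of its values and $x(s)\to x$, a standard mean-value/weak-closure argument (the integral of $\dot x$ over $[0,h_k]$, divided by $h_k$, lies in $\overline{\operatorname{co}}\,F(B(x,\delta_k)\cap K)$ with $\delta_k\to 0$, which shrinks to $F(x)$ by u.s.c.\ and closedness/convexity of $F(x)$) shows $v\in F(x)$. Hence $F(x)\cap\mathcal T_K(x)\neq\emptyset$.

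The converse, (2)$\Rightarrow$(1), is the substantive direction and I expect it to be the main obstacle. The standard approach is an Euler-type polygonal (or proximal-aiming) construction: starting from $x_0\in K$, one builds approximate solutions that at each small step move along a vector $v\in F(\xi)\cap\mathcal T_K(\xi)$ and then one projects or corrects back to $K$, controlling the correction error by the tangency condition so that it is $o(\text{step size})$. The uniform bound $b$ yields equi-Lipschitz approximate trajectories; Arzel\`a--Ascoli extracts a uniformly convergent subsequence on compact time intervals; the limit stays in $K$ because $K$ is closed and the approximate trajectories are within vanishing distance of $K$; and the limit solves the inclusion because $F$ is u.s.c.\ with closed convex values (the usual convergence theorem for differential inclusions: u.o.c.\ limits of approximate solutions whose derivatives lie in $F$ of a shrinking neighborhood are genuine solutions). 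The delicate points are (i) making the selection $v\in F(\xi)\cap\mathcal T_K(\xi)$ and the projection step compatible so that the polygonal arc stays within $o(h)$ of $K$ uniformly, and (ii) the passage to the limit in the inclusion, which is where convexity of the values of $F$ is essential. Since this is the classical Nagumo/Haddad viability theorem, I would cite \cite[Theorem 5.2]{smirnov} for the full technical execution rather than reproduce it, and merely indicate the two-sided argument above; in the paper's context only the stated equivalence is needed downstream.
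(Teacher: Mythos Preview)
Your sketch of both directions is the standard Nagumo--Haddad argument and is correct as outlined; in particular your identification of the delicate points (the $o(h)$-close polygonal construction and the convexity-based convergence theorem for u.s.c.\ inclusions) is accurate. However, there is nothing to compare against: the paper does not prove this theorem at all but simply quotes it from \cite[Theorem~5.2]{smirnov}, exactly as you yourself propose doing at the end of your write-up, so your approach coincides with the paper's.
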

Here $\mathcal{T}_K(x)$ denotes the contingent cone to $K\subset \R^n$ at $x$, which is defined as the set of $v  \in \R^n$ such that there is a sequence $(h_n)_{n\in \N} \subset \mbox{int}(\R_+)$ converging to $0$ and a sequence $(v_n)_{n \in \N} \subset \R^n$ converging to $v$ such that for all $n \in \N$ it holds that $x+h_n\,v_n \in K$. 
A useful criterion to conclude upper semicontinuity of a parameterized set-valued map is the following \cite[Proposition~1.4.14]{aubin-frankowska}.
\begin{proposition}\label{prop:para-set-valued-map}
Let $X,Y$ and $Z$ be metric spaces and $U:X\rightsquigarrow Z$ be a set-valued map. Assume that $f:\mbox{graph}(U) \rightarrow Y$ is continuous. If $U$ is upper semicontinuous with compact values then $F:X \rightsquigarrow Y$ defined by
\begin{align*}
F(x) :=\{ f(x,u) \, : \, u\in U(x)\}.
\end{align*}
is upper semicontinuous.
\end{proposition}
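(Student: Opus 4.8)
\emph{Proof plan.} Since $F(x)=f(\{x\}\times U(x))$ is nonempty precisely when $U(x)$ is, one has $\mathrm{dom}(F)=\mathrm{dom}(U)$, so it suffices to establish upper semicontinuity of $F$ at an arbitrary $x_0\in\mathrm{dom}(U)$. I would argue by contradiction: assuming $F$ is not upper semicontinuous at $x_0$, there is an open set $V\supset F(x_0)$ together with a sequence $(x_n)_{n\in\N}\subset\mathrm{dom}(U)$, $x_n\to x_0$, such that $F(x_n)\not\subset V$ for all $n$; choosing $u_n\in U(x_n)$ with $y_n:=f(x_n,u_n)\notin V$ then produces the data I will work with.

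The key step is to show that the selections $(u_n)$ subconverge to a point of $U(x_0)$, and I expect this to be the only nontrivial point; here compactness of $U(x_0)$ enters essentially. For each $k\in\N$ the set $W_k:=\{z\in Z:\mathrm{dist}(z,U(x_0))<1/k\}$ is an open neighborhood of the (nonempty, compact) set $U(x_0)$; upper semicontinuity of $U$ at $x_0$ yields $\e_k>0$ with $U(x')\subset W_k$ whenever $x'\in B(x_0,\e_k)\cap\mathrm{dom}(U)$, and since $x_n\to x_0$ this forces $\mathrm{dist}(u_n,U(x_0))\to 0$. Picking nearest points $p_n\in U(x_0)$ (the distance is attained by compactness) and extracting a subsequence with $p_{n_j}\to u_*\in U(x_0)$, the triangle inequality gives $u_{n_j}\to u_*\in U(x_0)$.

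Finally I would close the argument using continuity of $f$. Since $u_*\in U(x_0)$, the pair $(x_0,u_*)$ lies in $\mathrm{graph}(U)$ and $(x_{n_j},u_{n_j})\to(x_0,u_*)$ there (with the product metric on $X\times Z$). Continuity of $f$ on $\mathrm{graph}(U)$ then gives $y_{n_j}=f(x_{n_j},u_{n_j})\to f(x_0,u_*)\in F(x_0)\subset V$; as $V$ is open this puts $y_{n_j}\in V$ for all large $j$, contradicting $y_{n_j}\notin V$. Hence $F$ is upper semicontinuous at $x_0$, and $x_0\in\mathrm{dom}(U)$ being arbitrary, $F$ is upper semicontinuous. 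The remaining routine bookkeeping (taking the $\e_k$ decreasing and diagonalizing to obtain $\mathrm{dist}(u_n,U(x_0))\to 0$ cleanly) I would simply carry out directly.
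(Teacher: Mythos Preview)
Your argument is correct: the contradiction setup, the use of upper semicontinuity of $U$ together with compactness of $U(x_0)$ to force a subsequence $u_{n_j}\to u_*\in U(x_0)$, and the final appeal to continuity of $f$ on $\mathrm{graph}(U)$ all go through as stated. Note, however, that the paper does not actually prove this proposition; it is quoted verbatim from \cite[Proposition~1.4.14]{aubin-frankowska} as a known tool, so there is no in-paper proof to compare your approach against. Your proof is the standard one and would be an acceptable substitute for the citation.
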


\section{Applications to some fluid networks} \label{appl-fn}

In this section we show that our main result can be applied to some special fluid networks. In particular, we show that fluid networks under general work-conserving and priority disciplines satisfy the additional conditions (e) and (f) given in Definition~\ref{mGFN}.
The following description of a fluid network is taken from \cite{Yechen}. A fluid network consists of $K$ different fluid classes and $J$ stations, where the fluids are served. There is a (not necessarily injective) map $s$ that prescribes which fluid class is served at which station. Fluid class $k$ is exclusively served at station $s(k)$. For every station the set $C(j) := \{ k \in \{1,...,K\} \, : \, s(k) =j \}$ can without loss of generality assumed to be nonempty. The corresponding $J\times K$ matrix $C$ is called the constituency matrix, where $c_{jk} =1$ if $s(k)=j$ and zero else. Further we introduce two nonnegative vectors $\alpha,\mu \in \R_+^K$ and a $K \times K$ substochastic matrix $P$. Where $\alpha_k$ denotes the exogenous inflow rate of fluid class $k$ and $\mu_k$ denotes the potential outflow rate of fluid class $k$. The matrix $P$ will be referred to as the routing matrix. The element $p_{kl}$ of $P$ denotes the proportion of the outflow of class $k$ which turns into fluid class $l$. So $1- \sum_{l=1}^K p_{kl}$ is the part of the outflow of class $k$ that leaves the network. The routing matrix is assumed to have spectral radius strictly less than one, i.e. all fluids eventually leave the network. The initial fluid level is represented through the $K$-dimensional vector $Q_0$. The fluid network is described by $(\alpha,\mu,P,C)$ with initial fluid level $Q_0$. The time-evolution is described by the $K$-dimensional fluid level process $\{Q(t) \, : \, t\geq 0\}$ and the $K$-dimensional allocation process $\{T(t)\, : \, t \geq 0\}$, where $Q_k(t)$ denotes the amount of class $k$ fluids in the network at time $t$ and $T_k(t)$ denotes the total amount of time during the interval $[0,t]$ that station $s(k)$ has devoted to serve fluid class $k$. We note that the processes are Lipschitz continuous and hence differentiable almost everywhere by Rademacher's Theorem. A precise description of the dynamics of a fluid network depends on the service discipline.

\subsection{Fluid networks under general work-conserving disciplines}

The dynamics of a fluid network under a general work-conserving service discipline can be summarized as follows
\begin{align}\label{D1}
 Q(t) &= Q_0 + \alpha\,t -(I-P^T) M T(t) \geq 0,\\ \label{D2}
{T}(0) &=0 \text{  and  } {T}(\cdot) \text{ is nondecreasing, } \\\label{D3}
{I}(t) &= et -C\,{T}(t) \text{  and  } {I}( \cdot)\text{ is nondecreasing, }\\\label{D4}
0&=\int_0^{\infty} (C\, {Q}(t) )^T \,\,d{I}(t),
\end{align}
where $M = \text{diag}(\mu)$. Equation~(\ref{D4}) describes the work-conserving property of the network and relation (\ref{D1}) is called the flow balance relation. In general the allocation process is not unique. Any pair $({Q}(\cdot),{T}(\cdot))$ that satisfies (\ref{D1})-(\ref{D4}) is called fluid solution of the work-conserving fluid network. The set of all feasible fluid level processes is denoted as
\begin{align*}
\mathcal{Q}_C=\{ Q(\cdot) : \, \exists \, T(\cdot) \mbox{ such that } (Q(\cdot),T(\cdot)) \text{ is a fluid solution}\, \}.
\end{align*}
To prove the existence of a work-conserving allocation process we bring the conditions \eqref{D2}-\eqref{D4} into the context of differential inclusions. To this end, we define $\dot T(t)=:u(t)$ and consider the differential form of the flow balance equation 
\begin{align}\label{diffbalance-wc}
 \dot Q (t) =  \alpha - (I-P^T)\, M u(t).
\end{align}
For $Q\in \R^K_+$ the conditions 
defining the admissible values of $u$ are
\begin{align}\label{U1}
 u \geq 0,\qquad
e-Cu\geq0,\qquad
(CQ)^T\cdot(e-Cu) = 0\,.
\end{align}
These are immediate consequences of \eqref{D2}, \eqref{D3} and \eqref{D4}
in their differentiation. Note the discontinuity of these conditions on
the boundary of $\R^K_+$, because in this case zeros may appear in
$(CQ)^T$. Now the set of admissible controls is
\[
 U_C(Q) := \left\{ u \in \mathbb{R}^K \, :\,  \eqref{U1} \text{ is satisfied }\right\}.
\]
Using $f(Q,u) := \alpha - (I-P^T)Mu$ this leads to a differential inclusion of the form 
\begin{align}\label{diff-incl-Wc}
\dot Q(t) \in  \left\{ f(Q(t),u) \, :\ u \in U_C(Q(t))\right\}.
\end{align} 
For brevity, we define the following set-valued map $F: \R_+^K \rightsquigarrow \R^K$ by
\begin{align}\label{setvaluedG}
F(Q) = \left\{ f(Q,u) \, :\ u \in U_C(Q)\right\}
\end{align}
so that the corresponding differential inclusion compactly reads as
\begin{align}\label{diffinclG}
\dot Q(t) \in F(Q(t)), \quad Q(0)=Q_0.
\end{align}
Using this approach we are able to give an 
alternative proof for the Theorem 2.1 in \cite{chen}.

\begin{theorem}\label{existence}
For any work-conserving fluid network $(\alpha,\mu, P, C)$ with an initial level $Q_0$ the set $\mathcal{Q}_C$ is nonempty.
\end{theorem}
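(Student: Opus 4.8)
The statement to prove is that for any work-conserving fluid network $(\alpha,\mu,P,C)$ with initial level $Q_0 \in \R_+^K$, the set $\mathcal{Q}_C$ is nonempty, i.e. there exists at least one fluid solution. The plan is to apply the viability existence theorem, Theorem~\ref{exists}, to the closed-loop differential inclusion \eqref{diffinclG} on the closed set $K = \R_+^K$. The first step is to verify that the set-valued map $F$ defined in \eqref{setvaluedG} satisfies the hypotheses of Theorem~\ref{exists}: it should have nonempty, closed, convex values contained in a fixed ball, and be upper semicontinuous. Boundedness is immediate since $f(Q,u) = \alpha - (I-P^T)Mu$ and the constraints $0 \le u$, $Cu \le e$ bound $u$ uniformly (each station's total allocation rate is at most $1$), so $F(Q)$ lies in a fixed ball of radius $b$. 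Convexity and closedness of $F(Q)$ follow because $U_C(Q)$ is a polyhedron (intersection of halfspaces and the hyperplanes forced by $(CQ)^T(e-Cu)=0$) and $f(Q,\cdot)$ is affine; nonemptiness holds because $u = 0$ always satisfies \eqref{U1}.

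The main obstacle is upper semicontinuity of $F$, because, as the paper explicitly warns, the constraint $(CQ)^T(e-Cu)=0$ is discontinuous on $\partial \R_+^K$: as $Q$ approaches a point where some coordinate $(CQ)_j$ vanishes, the constraint $e_j - (Cu)_j = 0$ is suddenly dropped, so $U_C$ is \emph{not} upper semicontinuous there (it can jump to a larger set). The trick will be to observe that this only ever \emph{enlarges} $U_C$ in the limit in a controlled way, or — more robustly — to pass to the upper semicontinuous regularization. Concretely, I would either (i) replace $U_C$ by its closed convex hull / upper limit $\widetilde{U}_C(Q) := \{u \ge 0 : Cu \le e,\ u_k = 0 \text{ whenever } (CQ)_{s(k)} > 0 \text{ and } (Cu)_{s(k)} = e_{s(k)} \text{ fails}\ldots\}$ — rather, cleaner: use the map $Q \rightsquigarrow \{u \ge 0: Cu \le e,\ (CQ)^T(e-Cu)=0\}$ and note that its upper semicontinuous hull still has values satisfying $Cu \le e$, hence still admits selections tangent to $\R_+^K$; or (ii) appeal to Proposition~\ref{prop:para-set-valued-map} after verifying upper semicontinuity of an enlarged but still adequate feedback map. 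The key point is that any solution of the differential inclusion with the enlarged map, once it touches the boundary, will in fact satisfy the original work-conserving condition \eqref{D4} because \eqref{D4} is an integral condition and the set of times where $(CQ)_j > 0$ yet service is idled will have measure zero by a standard argument on the boundary.

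Next I would check the tangency condition (2) of Theorem~\ref{exists}: for every $Q \in \R_+^K$, $F(Q) \cap \mathcal{T}_{\R_+^K}(Q) \ne \emptyset$. The contingent cone to $\R_+^K$ at $Q$ is $\{v : v_k \ge 0 \text{ whenever } Q_k = 0\}$. So I need to exhibit an admissible $u \in U_C(Q)$ with $f(Q,u)_k = \alpha_k - ((I-P^T)Mu)_k \ge 0$ for all $k$ with $Q_k = 0$. This is precisely where the work-conserving structure is used: if $Q_k = 0$ then $(CQ)_{s(k)}$ may or may not be positive; when $(CQ)_{s(k)} > 0$, the constraint forces $(Cu)_{s(k)} = e_{s(k)}$, i.e. station $s(k)$ works at full rate, and one checks using $\alpha_k + (P^T M u)_k \ge \mu_k u_k$ at an empty buffer that the fluid level cannot be pushed negative — essentially the content of the Skorokhod/reflection argument. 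I would construct such a $u$ explicitly (e.g. at an empty buffer set $u_k$ large enough to absorb the inflow but respecting the station capacity constraint, which is feasible precisely by work-conservation), or invoke a fixed-point / complementarity argument on the affine map restricted to the active set. Once both hypotheses of Theorem~\ref{exists} are verified, condition (1) gives a solution $Q(\cdot)$ with $Q(t) \in \R_+^K$ for all $t$; setting $T(t) := \int_0^t u(s)\,ds$ for the associated measurable selection $u$ recovers \eqref{D1}--\eqref{D4}, so $(Q(\cdot),T(\cdot))$ is a fluid solution and $\mathcal{Q}_C \ne \emptyset$. I expect the tangency verification and the correct handling of the boundary discontinuity to be the two places where care is genuinely required; everything else is routine.
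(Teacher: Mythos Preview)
Your overall strategy is the paper's: verify the hypotheses of Theorem~\ref{exists} for the differential inclusion $\dot Q\in F(Q)$ on $\R_+^K$, obtain upper semicontinuity of $F$ via Proposition~\ref{prop:para-set-valued-map}, check the tangency condition, and then recover an allocation $T(t)=\int_0^t u(s)\,ds$ from a Filippov measurable selection. The paper does exactly this, and is similarly brief on the tangency step.

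There are, however, two genuine errors in your plan. First, you have the direction of the boundary discontinuity backwards. When $(CQ_n)_j>0$ for a sequence with $(CQ_0)_j=0$ at the limit, the equality constraint $(Cu)_j=1$ is active for each $Q_n$ but is relaxed to $(Cu)_j\le 1$ at $Q_0$; hence $U_C(Q_n)\subset U_C(Q_0)$. More generally, $(CQ_0)_j>0$ implies $(CQ)_j>0$ for all $Q$ near $Q_0$, so the active index set can only shrink in the limit and $U_C(Q)\subset U_C(Q_0)$ for $Q$ close to $Q_0$. This is precisely upper semicontinuity of $U_C$; no regularization or enlarged feedback map is needed. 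What fails at the boundary is \emph{lower} semicontinuity, and that is the content of the separate argument for condition~(f) later in the paper.

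Second, $u=0$ does not lie in $U_C(Q)$ unless $CQ=0$: the third condition in \eqref{U1} forces $(Cu)_j=1$ at every station $j$ with $(CQ)_j>0$. Nonemptiness is still trivial (take any $u\ge 0$ with $Cu=e$, e.g.\ $u_k=1/|C(s(k))|$), but your stated witness is wrong.
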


\begin{proof}
From the conditions \eqref{U1} it follows that the set $U_C(Q)$ is compact and convex and upper semicontinuous. Further, the set-valued map $U_C(\cdot)$ is upper semicontinuous and $f(Q,u)$ is continuous. Hence, by Proposition~\ref{prop:para-set-valued-map} the set-valued map $F$ is upper semicontinuous. Moreover, $F$ has closed convex values that are contained in some ball with radius $b>0$. Also the conditions \eqref{U1} imply that $F(Q) \cap \mathcal{T}_{\R_+^K}(Q) \not = \emptyset$ for all $Q\in \R_+^K$. Then by Theorem~\ref{exists} there exists a solution to \eqref{diffinclG}.
To show the existence of an allocation $T$ let $Q(\cdot)$ be a solution to \eqref{diffinclG}. Note that $f(Q,u)$ is continuous in $u$ and the that $U(t):=\{ u \in \R_+^K\, : \, e-Cu\geq 0,\, \, (C\, Q(t))^T \, (e-Cu)=0\}$ is closed and bounded. Also, we note that $t \rightsquigarrow U(t)$ is upper semicontinuous. Then, by the Filippov measurable selection Lemma in
\cite[p. 78/79]{filippov}, there is a measurable selection $u(\cdot)$ of
$u(t) \in U_C(Q(t))$ such that
\begin{align*}
\dot Q (t) = \alpha - (I-P^T)M\,u(t) \qquad \mbox{ for almost all } \, t \geq 0.
\end{align*}
Thus, integrating the latter yields that, given the initial value $Q_0$, the pair $(Q(\cdot),T(\cdot))$ with $T(t) := \int_0^t u(s)ds$ is a fluid solution. 

\end{proof}
So, we can represent the set of work-conserving fluid level processes by
\[
 \mathcal{Q}_C = \{ Q(\cdot) \in \mathcal{S}_{F}(Q_0) \, :\, Q_0 \in \R_+^K \, \}.
\]
In \cite{Yechen} it is shown that $\mathcal{Q}_C$ defines a closed GFN model. So we only have to prove that the conditions~(e) and (f) are satisfied.

\begin{proposition}\label{comprop}
The set of fluid level processes $\mathcal{Q}_C$ satisfies the concatenation property.
\end{proposition}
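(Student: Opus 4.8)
The plan is to show that the concatenation of two work-conserving fluid level processes at a matching point is again a work-conserving fluid level process, by concatenating the corresponding allocation processes and verifying that the resulting pair $(Q,T)$ satisfies the defining conditions \eqref{D1}--\eqref{D4}. More precisely, suppose $Q_1(\cdot),Q_2(\cdot)\in\mathcal{Q}_C$ with allocation processes $T_1(\cdot),T_2(\cdot)$, and $Q_1(t^*)=Q_2(0)$ for some $t^*\geq 0$. Set $Q:=Q_1\diamond_{t^*}Q_2$ and define the concatenated allocation
\[
 T(t) := \begin{cases} T_1(t) & 0\leq t\leq t^*,\\ T_1(t^*)+T_2(t-t^*) & t\geq t^*.\end{cases}
\]
First I would check the flow balance relation \eqref{D1}: for $t\leq t^*$ it holds by assumption on $(Q_1,T_1)$; for $t\geq t^*$ one writes $Q(t)=Q_2(t-t^*)=Q_2(0)+\alpha(t-t^*)-(I-P^T)M\,T_2(t-t^*)$ and substitutes $Q_2(0)=Q_1(t^*)=Q_0+\alpha t^*-(I-P^T)M\,T_1(t^*)$, which after collecting terms gives exactly $Q_0+\alpha t-(I-P^T)M\,T(t)$; nonnegativity is inherited from $Q_1$ and $Q_2$ on the respective intervals.

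Next I would verify \eqref{D2} and \eqref{D3}. Monotonicity of $T$ follows because $T_1$ is nondecreasing on $[0,t^*]$, $T_2$ is nondecreasing, and the two pieces are glued continuously so there is no downward jump at $t^*$. For the idleness process $I(t)=et-C\,T(t)$, on $[0,t^*]$ it equals $I_1(t)$ and on $[t^*,\infty)$ it equals $et-C\,T_1(t^*)-C\,T_2(t-t^*) = (et^*-C\,T_1(t^*)) + ((t-t^*)e-C\,T_2(t-t^*)) = I_1(t^*)+I_2(t-t^*)$, so $I$ is the sum of a constant and a nondecreasing function on that interval, hence nondecreasing, and again continuous at $t^*$. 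Finally for the work-conserving condition \eqref{D4}, the Stieltjes integral $\int_0^\infty (C\,Q(t))^T dI(t)$ splits as $\int_0^{t^*}(C\,Q_1(t))^T dI_1(t) + \int_{t^*}^\infty (C\,Q(t))^T dI(t)$; the first summand is $\leq \int_0^\infty(C\,Q_1(t))^T dI_1(t)=0$ (the integrand is nonnegative, so restricting the interval only decreases it — in fact it is still zero), and by a change of variables $r=t-t^*$ together with $dI(t)=dI_2(r)$ the second summand equals $\int_0^\infty (C\,Q_2(r))^T dI_2(r)=0$. Since all integrands are nonnegative, the total integral is zero.

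The main point requiring care — though it is not really an obstacle — is the handling of the Stieltjes measure $dI$ across the concatenation point: one must argue that $dI$ restricted to $[t^*,\infty)$ coincides (under the shift $r=t-t^*$) with $dI_2$, which is immediate from $I(t^*+r)=I_1(t^*)+I_2(r)$ and the fact that adding a constant does not change the differential measure, and that there is no atom of $dI$ concentrated exactly at $\{t^*\}$ that could be double-counted or missed — this follows from continuity of $I$ at $t^*$ (no jump). Everything else is a routine substitution using the flow balance identity at $t^*$. I would present the verification of \eqref{D1}--\eqref{D4} in that order, concluding that $(Q,T)$ is a fluid solution and hence $Q=Q_1\diamond_{t^*}Q_2\in\mathcal{Q}_C$, which is precisely condition (e).
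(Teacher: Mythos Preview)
Your proof is correct, but it takes a different route from the paper's. The paper does not verify \eqref{D1}--\eqref{D4} directly; instead it invokes the differential inclusion representation established just before the proposition, namely $\mathcal{Q}_C = \{\, Q(\cdot) \in \mathcal{S}_F(Q_0) : Q_0 \in \R_+^K \,\}$ with $F$ as in \eqref{setvaluedG}. Once that identification is made, the proof is one line: solutions of $\dot Q \in F(Q)$ are by definition absolutely continuous functions satisfying the inclusion almost everywhere, and the concatenation of two such functions at a matching point is again absolutely continuous and satisfies the inclusion away from the single point $t^*$, hence almost everywhere. No explicit concatenated allocation $T$ is built and none of \eqref{D1}--\eqref{D4} are revisited.

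Your approach, by contrast, works entirely at the level of the original defining relations: you construct $T$ by gluing $T_1$ and $T_1(t^*)+T_2(\cdot-t^*)$ and check each of \eqref{D1}--\eqref{D4} by hand, including the Stieltjes integral in \eqref{D4} via splitting and change of variables. This is more elementary and self-contained --- it does not rely on the Filippov-type selection argument or the identification of $\mathcal{Q}_C$ with a solution set of a differential inclusion --- and it makes explicit what happens to the allocation process under concatenation. The paper's approach is shorter and, because it operates at the level of the closed-loop inclusion \eqref{diffinclG}, transfers verbatim to the priority case (Proposition~\ref{comproppriority}) and to any other discipline whose fluid level processes coincide with solutions of a state-dependent differential inclusion; your argument would need the discipline-specific conditions to be re-verified each time.
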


\begin{proof}
    Since solutions of differential inclusions are by definition
    absolutely continuous functions, and concatenation preserves absolute
    continuity the assertion follows.

\end{proof}

To show that condition~(f) is satisfied we need to show that the solution map is lower semicontinuous.

\begin{theorem}\label{perfectness}
The set of fluid level processes $\mathcal{Q}_C$ satisfies the lower semicontinuity property~(f).
\end{theorem}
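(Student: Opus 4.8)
The goal is to show that the solution map $x \rightsquigarrow \mathcal{S}_F(x)\big\vert_{[0,T]}$ is lower semicontinuous for the work-conserving feedback system. The obstruction is that $F$ is \emph{not} lower semicontinuous on the boundary of $\R_+^K$: as noted after \eqref{U1}, zeros appear in $(CQ)^T$ there, so the constraint $(CQ)^T(e-Cu)=0$ suddenly becomes vacuous in some coordinates and $U_C$ jumps up. Thus one cannot simply invoke a classical Filippov--Wa\.zewski type relaxation/continuity theorem for the closed-loop inclusion $\dot Q \in F(Q)$. The plan is to work instead with the \emph{open-loop} description and exploit that the ``bad'' coordinates are exactly those where $Q_k = 0$, i.e. where the drift is pushing out of the orthant, and use the explicit affine structure of $f(Q,u)=\alpha-(I-P^\trans)Mu$.

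First I would fix $x_* \in \R_+^K$, a solution $Q_*(\cdot)\in\mathcal{S}_F(x_*)$ with associated measurable control $u_*(\cdot)\in U_C(Q_*(\cdot))$, and a sequence $x_n \to x_*$. I want to produce $Q_n(\cdot)\in\mathcal{S}_F(x_n)$ with $Q_n \to Q_*$ uniformly on $[0,T]$. The idea is to \emph{reuse the control} $u_*$ as much as possible: let $\tilde Q_n(\cdot)$ solve $\dot{\tilde Q}_n = f(\tilde Q_n,u_*(t))$, $\tilde Q_n(0)=x_n$. Since $f$ is affine in $Q$ with no $Q$-dependence at all in this case (the right-hand side $\alpha-(I-P^\trans)Mu_*(t)$ does not depend on $Q$!), in fact $\tilde Q_n(t) = Q_*(t) + (x_n - x_*)$ for all $t$, so $\tilde Q_n \to Q_*$ uniformly on $[0,T]$ trivially. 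The only issue is that $\tilde Q_n$ may briefly leave $\R_+^K$ (in coordinates $k$ where $Q_{*,k}(t)=0$ and $x_{n,k}<x_{*,k}$), so $\tilde Q_n$ is not literally a solution of the closed-loop inclusion with state constraint.

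The remaining work is to \textbf{correct} $\tilde Q_n$ back into the orthant while staying within the feedback map $U_C$ and keeping uniform convergence. Here I would argue as follows: on a (small, $n$-dependent) set of times the coordinates that dip below zero are exactly those with $Q_{*,k}$ near zero; by the work-conserving constraint one is allowed to increase the service allocated to the \emph{non-empty} classes and correspondingly there is freedom to modify $u$ so that the empty (or nearly-empty) classes keep $Q_{n,k}\ge 0$ — concretely, project $\tilde Q_n$ onto $\R_+^K$ and show that the resulting absolutely continuous curve $Q_n$ satisfies $\dot Q_n(t)=f(Q_n(t),u_n(t))$ for an admissible $u_n(t)\in U_C(Q_n(t))$, invoking the Filippov selection lemma (as in the proof of Theorem~\ref{existence}) once $Q_n$ is pinned down. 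Since the correction only acts where $\|Q_*(t)\|$ is within $O(\|x_n-x_*\|)$ of zero and the corrections are themselves $O(\|x_n-x_*\|)$ in size, $\|Q_n - Q_*\|_{\infty,[0,T]} \to 0$.

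\textbf{Main obstacle.} The delicate point is verifying that the projected curve $Q_n$ is again a bona fide solution of the closed-loop inclusion, i.e. that the associated control stays in $U_C(Q_n(t))$: one must check that setting $Q_{n,k}$ and its derivative to zero on the ``clipped'' intervals is consistent with $u_n \ge 0$, $e - Cu_n \ge 0$ and the complementarity condition $(CQ_n)^\trans(e-Cu_n)=0$. This is where the structure of work-conserving disciplines (a station never idles while it has fluid) is genuinely used, and where I expect the technical heart of the argument to lie; everything else (affine flow, Arzel\`a--Ascoli type bounds, the uniform Lipschitz constant from Definition~\ref{gfn}(a)) is routine. An alternative, possibly cleaner route is to appeal to a continuity theorem for solution sets of the differential inclusion $\dot Q \in F(Q)$ under a \emph{one-sided Lipschitz}/relaxed-one-sided-Lipschitz condition on $F$ relative to the orthant, which the work-conserving map can be shown to satisfy; this would yield Lipschitz (not merely lower semicontinuous) dependence of $\mathcal{S}_F$ on $x_0$ and immediately give (f).
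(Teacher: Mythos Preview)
Your high-level strategy --- reuse the control $u_*$ so that $\tilde Q_n(t)=Q_*(t)+(x_n-x_*)$, then repair the curve near the boundary --- is exactly the starting point of the paper's proof. But the repair step, as you describe it, has a real gap, and it is not the one you flag.

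You focus on the event that $\tilde Q_n$ \emph{leaves} $\R_+^K$ and propose to project back. The more serious obstruction is the opposite one: suppose station $j$ is empty for $Q_*$ on an interval and $u_*$ is \emph{idling} there, i.e.\ $e_j^\trans C u_*<1$. If the perturbed initial value has $e_j^\trans C x_n>0$, then $\tilde Q_n$ stays strictly inside the orthant (it is an upward shift in those coordinates), yet $u_*$ is \emph{inadmissible} for $\tilde Q_n$ because the complementarity condition $(C\tilde Q_n)^\trans(e-Cu_*)=0$ fails. Projection onto $\R_+^K$ does nothing here. Moreover, a coordinate-wise projection of $\tilde Q_n$ is not generally a flow-balance trajectory at all: clipping $\tilde Q_{n,k}$ to zero forces a specific value of $u_{n,k}$ via $\dot Q_k=\alpha_k-\mu_k u_k+\sum_l p_{lk}\mu_l u_l$, and changing $u_k$ perturbs every downstream class through $P$, so the ``projected'' curve does not satisfy \eqref{diffbalance-wc} with any measurable $u_n$. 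Invoking Filippov after the fact does not help, since Filippov selects a control for a curve that \emph{already} solves the inclusion.

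The paper avoids both difficulties by first invoking Remark~\ref{semicontremark} and Proposition~\ref{comprop} to reduce (f) to the pointwise condition (f'): it suffices to find, for each $Q_0$, a short interval $[0,T(Q_0)]$ on which the solution map is lower semicontinuous \emph{at} $Q_0$. On such an interval the occupancy pattern at time $0$ persists: classes with $Q_{0,k}>0$ stay positive, and the only problematic classes are those at stations that are empty at time $0$. For those stations the paper replaces $u_*$ by $u_*+v^n$, where $v^n\geq 0$ makes the station fully busy whenever it has fluid for $Q^n$; an explicit block computation shows this extra service drains the surplus $Q^n_{0,a}$ in time $r_n\to 0$, after which one reverts to $u_*$. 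This is the concrete ``modify $u$'' step you allude to, and it hinges on the local reduction you did not use. Your alternative route via a one-sided Lipschitz condition on $F$ is unlikely to succeed directly, since the upward jump of $U_C$ at the boundary is precisely a failure of any monotonicity of $F$ in the relevant direction.
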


\begin{proof}
    To show condition~(f) we have to verify the existence of a $T>0$ such
    that $Q_0 \rightsquigarrow \mathcal{S}_F (Q_0) {|}_{[0,T]}$ is lower
    semicontinuous. 
    In view of Remark~\ref{semicontremark} and Proposition~\ref{comprop} it is sufficient to construct
    for each $Q_0$ a $T(Q_0)>0$ such that (f') holds.

To this end, let $Q_0 \in \mathbb{R}_+^K$ be fixed, $Q(\cdot) \in \mathcal{S}_F(Q_0)$. Then, by the proof of Theorem~\ref{existence} there exists a function $u(\cdot) \in U(Q(\cdot))$ such that
\begin{align}\label{eq:lsc-proof-1}
 Q (t) = Q_0 + \alpha\, t - (I-P^T)M\,\int_0^t u(s)ds.
\end{align}
We 
%
distinguish the following situations.

First, suppose that $Q_0 \in \R_+^K$ and all stations have some nonempty queues, i.e. $CQ_0 >0$. Hence, there is a $T(Q_0)>0$ such that $CQ(t)>0$ for all $t \in [0,T(Q_0)]$. We note that $(CQ)^T\cdot(e-Cu) = 0$ from \eqref{U1} also reads as
\begin{align*}
\sum_{j =1}^J  \left( \sum_{l \in C(j)} Q_l  \cdot  \left(1 -\sum_{l \in C(j)}  u_l\right) \right) =0.
\end{align*}
Since both factors are nonnegative and, in fact, $CQ(t)>0$ for $t \in [0,T(Q_0)]$, it holds that 
\begin{align}\label{s1}
1 =\sum_{l \in C(j)} u_l(\cdot)|_{[0,T(Q_0)]}=:e_j^T C u(\cdot)|_{[0,T(Q_0)]}
\end{align}
for all $j=1,...,J$. 
Let $(Q^n_0)_{n \in \mathbb{N}}$ be a sequence of initial values which converges to $Q_0$ and define $ \delta:= \min_{k=1,...,K} Q_{0_k} $. Furthermore, let $0<\delta'<\delta$ and $N(\delta') \in \mathbb{N}$ be such that for all $n \geq N(\delta')$ we have that
$\| Q_0^n - Q_0\| < \delta'$. In particular, it holds that $Q^n_0 \in \mbox{int}(\mathbb{R}_+^K)$ for all $n \geq N(\delta')$. Hence, there is $T>0$ such that $Q(\cdot)|_{[0,T]} \in \mbox{int}(\R_+^K)$ and $Q^n(\cdot)|_{[0,T]} \in \mbox{int}(\R_+^K)$ for all $n \geq N(\delta')$. Now consider the paths
\begin{align*}
 Q^n (t) := Q^n_0 + \alpha\, t - (I-P^T)M\,\int_0^t u^n(s)ds,
\end{align*}
where for $n \geq N(\delta')$ we define $u^n(s):=u(s)$. For $n$ sufficiently large we have that $Q^n(t)\in \mbox{int}(\mathbb{R}_+^K)$ for $t \in [0, T(Q_0)]$, because
\begin{align}\label{eq-uni-con}
\lim_{n \rightarrow \infty} \sup_{t \in [0,T(Q_0)]} \| Q^n (t) - Q(t)\| =0.
\end{align}
Consequently, $Q \rightsquigarrow \mathcal{S}_F (Q) {|}_{[0,T(Q_0)]}$ is lower semicontinuous at $Q_0$.

Second, suppose that the initial fluid level at some stations is zero.
We first treat the case of a single station with empty queues. Without loss of generality let this station be $j=1$ and let $a$ denote the set of classes which are served at station $1$. Then, the last constraint in \eqref{U1} is not active for station $1$ and therefore the constraints for fluid classes $k \in a$ are given by
\begin{align}\label{cond-u-proof-lsc}
 u_k  \geq 0 ,\qquad 1 - \sum_{l \in a} u_l \geq 0.
\end{align}
However, since $Q(\cdot)$ is a solution to the differential inclusion~\eqref{diffinclG} potentially only a proper subset of \eqref{cond-u-proof-lsc} is feasible. 
If this condition enforces equality in the second constraint in \eqref{cond-u-proof-lsc}, then we can argue as in \eqref{s1} on a sufficiently small time interval and the proof of \eqref{eq-uni-con} applies again. The interesting case is when there is idle capacity at station $j=1$. Here  $u_k(\cdot)\geq 0$ are such that $\sum_{l \in a} u_l (\cdot) <1 $ and that the fluid levels of classes $k \in a$ remain nonnegative. Using $b := \{1,...,K\} \backslash a$ the differential form of the flow balance equation \eqref{D1} can be expressed in block form by
\begin{align*}
\begin{bmatrix}
\dot Q_a(t)\\
\dot Q_b(t) 
\end{bmatrix}
=
\begin{bmatrix}
\alpha_a\\
\alpha_b
\end{bmatrix}
+
\begin{bmatrix}
 P^T_a & P^T_{ab}\\
P^T_{ba} & P^T_b
\end{bmatrix}
\,\begin{bmatrix}
 M_a & 0\\
0& M_b
\end{bmatrix}
\,
\begin{bmatrix}
 u_a(t)\\
u_b(t)
\end{bmatrix}
- 
\begin{bmatrix}
 M_a & 0\\
0& M_b
\end{bmatrix}
\,
\begin{bmatrix}
 u_a(t)\\
u_b (t)
\end{bmatrix}.
\end{align*}
The nonnegativity of the fluid levels for classes $l \in a$ yields the following condition
\begin{align*}
0 \leq \alpha_a + P_{ab}^T \, M_b\, u_b(\cdot) - (I_a - P_a^T) \, M_a\, u_a(\cdot), 
\end{align*}
which also reads as 
\begin{align*}
u_a(\cdot) \leq  M_a^{-1}\, (I_a - P_a^T)^{-1}\, (\alpha_a + P_{ab}^T \, M_b\, u_b(\cdot)).
\end{align*}
As $e_j^TCQ_0 >0$ for $j \not = 1$ then, arguing as in \eqref{s1} there is a $T(Q_0)>0$ such that the allocation rates corresponding to fluid classes present at the stations $j \not =1$ satisfy $\sum_{l\in C(j)} u_l(\cdot)|_{[0,T(Q_0)]} =1$. 
Let $\varepsilon >0$ be fixed, so that if $\|Q_0 - Q\| < \varepsilon$ then $Q_k>0$ when $Q_{0,k}>0$.
Now, for another initial value  $Q^1_0$ with $\|Q_0 - Q_0^1\|< \varepsilon$ we consider $u^1(\cdot):=(u_a(\cdot) + v(\cdot)\, \, u_b(\cdot))^T$, where $v(\cdot)$ takes values in $\mathbb{R}^{|a|}$ such that
\begin{align}\label{cond-v-n}
\sum_{l \in a} u_l (t)+ {v}_l (t) =1 \quad \mbox{ if } \quad e_1^T C Q^1(t)>0,
\end{align}
and $v(t)=0$ otherwise.
Then, we consider the solution $Q^1(\cdot)$ associated with $u^1(\cdot)$ and $Q^1_0$, i.e. 
\begin{align*}
 Q^1 (t) &= Q_1 + \alpha\, t - (I-P^T)M\,\int_0^t \begin{bmatrix}
 u_a(s)+ v(s)\\
u_b(s)
\end{bmatrix} ds\\
         &= Q_1 -Q_0 + Q(t) - (I-P^T)M\,\int_0^t \begin{bmatrix}
 v(s)\\
0
\end{bmatrix}ds.
\end{align*}
So, the difference between the solutions $Q(\cdot)$ and $Q^1(\cdot)$ is given by
\begin{align*}
 Q^1 (t) - Q(t) = Q^1_0 -Q_0  +\int_0^t \begin{bmatrix}
P_a^T M_a v(s)\\
P_{ba}^T M_a v(s) 
\end{bmatrix} -  \begin{bmatrix}
M_a v(s)\\
0 
\end{bmatrix}ds
\end{align*}
In particular, as $Q_{0,a}=0$ we have that
\begin{align*}
 Q_a^1 (t) - Q_a(t) = Q^1_{0,a} - (I-P_a^T)M_a\,\int_0^t v(s) \, ds.
\end{align*}
Hence, if $Q_{0,a}^1>0$ the nonnegativity of $(I-P_a^T)M_a$ and $v(\cdot)$ imply that there is a $r\geq 0$ such that 
\begin{align}\label{eq:null}
Q^1_{0,a} - (I-P_a^T)M_a\,\int_0^r v(s) \, ds = 0.
\end{align}
We will assume that $v(\cdot)$ is chosen so that the time in which \eqref{eq:null} is achieved is minimal.

Thus, given  a sequence of initial values $(Q^n_0)_{n \in \mathbb{N}}$ converging to $Q_0$ and in particular $Q^n_{0,a}$ converging to zero, we define 
\[
r_n:=\min\{ r\geq 0 \, : \,v^n(\cdot) \mbox{ satisfies }  \eqref{cond-v-n} \mbox{ and } \eqref{eq:null}\}
\]
and
\begin{equation*}
u^n(t):= \begin{cases}
(\, u_a(t) + v^n(t)\, \, u_b(t)\, )^T & \quad  \text{for } \quad 0\leq t \leq r_n, \\
(\, u_a(t)\, \, u_b(t)\, )^T & \quad  \text{for } \quad t > r_n.
\end{cases}
\end{equation*}
Further, we note that \eqref{eq:null} implies that if $Q^n_{0,a}$ converges to $Q_{0,a}=0$ it holds that $r_n$ converges to zero as well. Hence, we have that $u^n(\cdot)$ converges to $u(\cdot)$ and consequently $Q^n(\cdot)$ converges uniformly to $Q(\cdot)$ on $[0,T(Q_0)]$, i.e. $Q(\cdot)|_{[0,T(Q_0)]}$ depends lower semicontinuously on $Q_0$.

The cases where more than one stations have empty queues follows the same line of reasoning.
Finally, the assertion follows from Remark~\ref{semicontremark}.

\end{proof}

Summarizing we obtain.

\begin{theorem}
General work-conserving fluid networks define strict GFN
models. In particular, it is stable if and only if it admits a continuous Lyapunov function.
\end{theorem}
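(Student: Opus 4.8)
The plan is to assemble the pieces already established in this section. First I would recall from \cite{Yechen} that the set $\mathcal{Q}_C$ of work-conserving fluid level processes satisfies conditions (a)--(d) of Definition~\ref{gfn}, so that it is a closed GFN model; this is exactly condition (\textit{a}') of Definition~\ref{mGFN}. (Theorem~\ref{existence} additionally guarantees that $\mathcal{Q}_C$ is nonempty and that $\A(\mathcal{Q}_C)=\R_+^K$, so the discussion preceding Proposition~\ref{usc} applies.) Next, Proposition~\ref{comprop} supplies the concatenation property (e), and Theorem~\ref{perfectness} supplies the lower semicontinuity property (f). Since (\textit{a}'), (e) and (f) are precisely the three defining properties in Definition~\ref{mGFN}, it follows that $\mathcal{Q}_C$ is a strict GFN model, which is the first assertion.

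For the second assertion I would simply invoke Theorem~\ref{FLtheo} applied to $\mathcal{Q}=\mathcal{Q}_C$: a strict GFN model is stable if and only if it admits a Lyapunov function, and in that case the candidate $V(x)=\sup_{Q(\cdot)\in\mathcal{Q}_x}\int_0^\infty\|Q(s)\|\,ds$ is a continuous Lyapunov function by Proposition~\ref{Vcont}. Reading stability of the fluid network as stability of the associated GFN model $\mathcal{Q}_C$ in the sense of Definition~\ref{GFNstab} (equivalently, by Lemma~\ref{stabdefequiv}, asymptotic stability of the zero path), this yields exactly the stated equivalence.

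Thus the proof is essentially a bookkeeping step that strings together the earlier results, and there is no new mathematical content to produce here. The one place where I would want to be careful is in confirming that Theorem~\ref{perfectness} really does cover every boundary configuration of the initial level $Q_0$: its proof treats in full the two cases $CQ_0>0$ and exactly one empty station, and only remarks that the case of several simultaneously empty stations ``follows the same line of reasoning.'' So the main obstacle, if any, is the combinatorial verification that the block decomposition and the minimal-time reset argument behind \eqref{eq:null} extend verbatim when the index set of empty classes is a union of several station constituencies; everything else in the present statement is an immediate consequence of the cited propositions and theorems.
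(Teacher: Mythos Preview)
Your proposal is correct and matches the paper exactly: the theorem is stated immediately after ``Summarizing we obtain'' with no separate proof, so it is indeed intended as a bookkeeping step combining the fact that $\mathcal{Q}_C$ is a closed GFN model (from \cite{Yechen}), Proposition~\ref{comprop}, Theorem~\ref{perfectness}, and Theorem~\ref{FLtheo}. Your caveat about the several-empty-stations case in Theorem~\ref{perfectness} is a fair observation, but it concerns that earlier result rather than the present theorem, which the paper treats as an immediate corollary.
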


\subsection{Fluid networks under priority disciplines}

The priority service discipline 
%
assigns different priorities to the fluid classes that are served at one station,
\cite{Yechen}. This is done via a permutation mapping $ \pi : \{1,...,K\}
\rightarrow \{1,...,K\}.  $ To be precise, let $s(l)=s(k)$ for $l,k \in
\{1,...,K\}$ then fluids of class $l$ have higher priority than fluids of
class $k$, if $\pi(l)<\pi(k)$.  That is, fluids of class $k$ are not
served as long as the fluid level of class $l$ is greater than
zero. For each $k \in \{1,...,K\}$ the set of fluid classes that are
served at the same location $s(k)$ and have higher priority is denoted
by $ \Pi_k := \{ l\, \, \in \{1,...,K\}: l \in C(s(k)), \, \pi(l) \leq
\pi(k) \}.  $ To derive a description of fluid networks under the priority
discipline $\pi$ we introduce the unused capacity process $Y(t)$. Namely,
$Y_k(t)$ is denotes the cumulative remaining capacity of location $s(k)$
for serving fluids of classes that have strictly lower priority than
fluids of class $k$. The dynamics can be described as follows
\begin{align}\label{Dp1}
 Q(t) &= Q_0 + \alpha\,t -(I-P^T) M T(t) \geq 0,\\ \label{Dp2}
{T}(0) &=0 \text{  and  } {T}(\cdot) \text{ is nondecreasing, } \\\label{Dp3}
Y_k(t) &= t-\sum_{l \in \Pi_k} T_l(t) \text{  and  } {Y}( \cdot) \text{ is nondecreasing, }k \in \{1,...,K\}\\\label{Dp4}
0&=\int_0^{\infty} Q_k(t) \,\,dY_k(t), \qquad k\in \{1,...,K\}.
\end{align}
Any pair $({Q}(\cdot),{T}(\cdot))$ that satisfies (\ref{Dp1})-(\ref{Dp4}) is called a fluid solution of the fluid network under the priority discipline $\pi$. The set of all feasible fluid level processes is denoted as
\begin{align*}
\mathcal{Q}_P=\{ Q(\cdot) : \, \exists \, T(\cdot) \mbox{ such that } (Q(\cdot),T(\cdot)) \text{ is a fluid solution}\, \}.
\end{align*}
Again we bring this into the context of differential inclusions by setting $\dot T(t)=u(t)$. The constraints for $k \in \{1,...,K\}$ are here
\begin{align}\label{Up1}
 u_k \geq 0,\quad 
1-\sum_{l \in \Pi_k} u_l\geq0, \quad 
Q_k \cdot (1-\sum_{l \in \Pi_k} u_l) = 0
\end{align}
and the set of admissible controls is
\begin{align*}
 U_P(Q) := \left\{ u \in \mathbb{R}_+^K \, :\,  \eqref{Up1} \text{ is
     satisfied for all } k \in \{1,...,K\} \right\}.
\end{align*}

Following the same line of 
reasoning we conclude the following.

\begin{theorem}\label{existenceandperfectness}
The set $\mathcal{Q}_P$ is nonempty and satisfies the lower semicontinuity property.
\end{theorem}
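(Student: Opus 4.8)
The plan is to mirror the structure of the work-conserving case (Theorems~\ref{existence} and~\ref{perfectness}) and adapt each step to the priority constraints \eqref{Up1}. First I would establish nonemptiness of $\mathcal{Q}_P$ exactly as in the proof of Theorem~\ref{existence}: the constraints \eqref{Up1} cut out a set $U_P(Q)$ that is compact, convex, and for which the set-valued map $Q\rightsquigarrow U_P(Q)$ is upper semicontinuous (again the only discontinuities occur where some $Q_k$ vanishes and the complementarity constraint $Q_k(1-\sum_{l\in\Pi_k}u_l)=0$ switches off). Combining this with continuity of $f(Q,u)=\alpha-(I-P^T)Mu$ and Proposition~\ref{prop:para-set-valued-map} gives that $F(Q)=\{f(Q,u):u\in U_P(Q)\}$ is upper semicontinuous with closed convex values in a ball; checking that $F(Q)\cap\mathcal{T}_{\R_+^K}(Q)\neq\emptyset$ for every $Q$ (which follows because whenever $Q_k=0$ one may choose $u$ with $\sum_{l\in\Pi_k}u_l=1$, forcing $\dot Q_k\geq0$) lets Theorem~\ref{exists} produce a viable solution, and the Filippov selection lemma then recovers an allocation $T(\cdot)=\int_0^\cdot u(s)\,ds$, so $\mathcal{Q}_P\neq\emptyset$.

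Next I would verify the lower semicontinuity property (f). As in the proof of Theorem~\ref{perfectness}, by Remark~\ref{semicontremark} together with the concatenation property (which holds for $\mathcal{Q}_P$ for the same reason as in Proposition~\ref{comprop}, namely concatenation preserves absolute continuity) it suffices to produce, for each fixed $Q_0$ and each $Q(\cdot)\in\mathcal{S}_F(Q_0)$, a time $T(Q_0)>0$ and, for any sequence $Q_0^n\to Q_0$, controls $u^n(\cdot)$ yielding solutions $Q^n(\cdot)$ with $Q^n(0)=Q_0^n$ and $Q^n\to Q$ uniformly on $[0,T(Q_0)]$. The argument splits according to which components of $Q_0$ vanish. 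If $Q_{0,k}>0$ for all $k$, continuity keeps $Q_k(t)>0$ and $Q_k^n(t)>0$ on a short interval, so the complementarity constraints are inactive on that interval and one may simply take $u^n:=u$; the estimate $\sup_{[0,T(Q_0)]}\|Q^n(t)-Q(t)\|=\|Q_0^n-Q_0\|\to0$ closes this case. If some $Q_{0,k}=0$, one argues as in the work-conserving proof: for classes whose complementarity constraint is already forcing $\sum_{l\in\Pi_k}u_l=1$ nothing changes, while if there is slack one modifies $u$ on the components indexed by the empty classes by an amount $v^n(\cdot)$ chosen so that (i) the perturbed solution starts at $Q_0^n$, (ii) the empty components of $Q^n$ are driven back to match those of $Q$ in minimal time $r_n$, and (iii) $r_n\to0$ because the discrepancy $Q_{0,a}^n\to0$. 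Then $u^n\to u$ and $Q^n\to Q$ uniformly on $[0,T(Q_0)]$, giving lower semicontinuity at $Q_0$.

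The main obstacle, and the place where the priority structure genuinely differs from the work-conserving one, is the bookkeeping in this last case: under a priority discipline the constraints \eqref{Up1} are nested ($\Pi_k$ accumulates all higher-priority classes at station $s(k)$), so emptying or refilling a low-priority buffer changes the feasible controls for every lower-priority class at the same station. Concretely, the analogue of the block decomposition in the proof of Theorem~\ref{perfectness} must be taken with respect to the priority order, and one must check that the minimal-time perturbation $v^n$ can be chosen to respect \emph{all} the nested inequalities $1-\sum_{l\in\Pi_k}u_l\geq0$ simultaneously, not just the single station-level constraint; the nonnegativity of $(I-P_a^T)^{-1}$ (where $a$ now denotes the relevant priority-closed class set) and the spectral-radius assumption on $P$ are what make this perturbation well defined and keep $r_n\to0$. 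Once this combinatorial/monotonicity point is handled, the remaining estimates are identical to those in the work-conserving proof, and I would finish by invoking Remark~\ref{semicontremark} to upgrade the pointwise statement (f') to (f), concluding that $\mathcal{Q}_P$ satisfies the lower semicontinuity property.
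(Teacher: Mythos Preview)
Your proposal is correct and takes essentially the same approach as the paper, which in fact provides no detailed proof at all but merely states that the theorem follows ``following the same line of reasoning'' as in the work-conserving case (Theorems~\ref{existence} and~\ref{perfectness}). Your explicit identification of where the priority structure requires extra care---the nested constraints $1-\sum_{l\in\Pi_k}u_l\geq0$ indexed by the priority order rather than a single station-level constraint---goes beyond what the paper itself offers.
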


In order to prove that $\mathcal{Q}_P$ is a strict GFN model it remains to show that the concatenation property holds, as the validity of the conditions (a)-(d) is shown in \cite[Lemma 3.5]{Yechen}. Using results from differential inclusions we obtain the following result.

\begin{proposition}\label{comproppriority}
The set $\mathcal{Q}_P$ satisfies the concatenation property. 
\end{proposition}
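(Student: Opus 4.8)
The plan is to mirror the argument used for work-conserving networks in Proposition~\ref{comprop}: represent $\mathcal{Q}_P$ as the family of solutions of a closed-loop differential inclusion, and then exploit that concatenation preserves absolute continuity. Concretely, with $f(Q,u) := \alpha - (I-P^T)Mu$ and $U_P(Q)$ as defined above, set $F_P(Q) := \{ f(Q,u) : u \in U_P(Q)\}$ and consider $\dot Q(t) \in F_P(Q(t))$, $Q(0)=Q_0$. The first step is to check that $F_P$ meets the hypotheses of Theorem~\ref{exists}: for fixed $Q$ the constraints \eqref{Up1} are a system of linear inequalities in $u$ together with, for those $k$ with $Q_k>0$, the affine equality $\sum_{l\in\Pi_k} u_l = 1$, so $U_P$ has nonempty compact convex values and is upper semicontinuous; since $f$ is continuous, Proposition~\ref{prop:para-set-valued-map} gives that $F_P$ is upper semicontinuous with closed convex values contained in a fixed ball; and \eqref{Up1} is exactly what is needed to verify $F_P(Q)\cap\mathcal{T}_{\R_+^K}(Q)\neq\emptyset$ for all $Q\in\R_+^K$. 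This is the existence half of Theorem~\ref{existenceandperfectness}, proved just as in Theorem~\ref{existence}.

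Next I would establish the identification
\[
 \mathcal{Q}_P = \{\, Q(\cdot)\in\mathcal{S}_{F_P}(Q_0)\,:\,Q_0\in\R_+^K\,\}.
\]
For the inclusion ``$\subseteq$'', if $(Q(\cdot),T(\cdot))$ is a fluid solution then $u:=\dot T$ is a measurable selection of $U_P(Q(\cdot))$ almost everywhere: $T$ nondecreasing gives $u\geq 0$; $Y_k$ in \eqref{Dp3} nondecreasing gives $1-\sum_{l\in\Pi_k}u_l\geq 0$ a.e.; and \eqref{Dp4} together with $Q_k\geq 0$ and $dY_k\geq 0$ forces $Q_k(t)\,(1-\sum_{l\in\Pi_k}u_l(t))=0$ a.e., so $Q$ solves the differential inclusion. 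For ``$\supseteq$'', given $Q(\cdot)\in\mathcal{S}_{F_P}(Q_0)$, the map $t\rightsquigarrow U_P(Q(t))$ has closed values and depends measurably on $t$ through the continuous path $Q(\cdot)$, so by the Filippov measurable selection lemma \cite[p.~78/79]{filippov} there is a measurable $u(\cdot)$ with $u(t)\in U_P(Q(t))$ and $\dot Q(t)=f(Q(t),u(t))$ a.e.; setting $T(t):=\int_0^t u(s)\,ds$ one reads \eqref{Dp1}--\eqref{Dp4} back off \eqref{Up1} and the flow balance relation. This is the step already alluded to as ``following the same line of reasoning'' before Theorem~\ref{existenceandperfectness}.

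Finally, the concatenation property follows exactly as in Proposition~\ref{comprop}. If $Q_1(\cdot),Q_2(\cdot)\in\mathcal{Q}_P$ satisfy $Q_1(t^*)=Q_2(0)$, then by the identification above both are absolutely continuous solutions of $\dot Q\in F_P(Q)$; the concatenation $Q_1\diamond_{t^*}Q_2$ is again absolutely continuous (the matching of values at $t^*$ rules out a jump, and each one-sided piece is absolutely continuous), and for almost every $t$ its derivative equals either $\dot Q_1(t)\in F_P(Q_1(t))$ or $\dot Q_2(t-t^*)\in F_P(Q_2(t-t^*))$, i.e.\ it lies in $F_P$ evaluated at the current value; hence $Q_1\diamond_{t^*}Q_2\in\mathcal{S}_{F_P}(Q_1(0))\subset\mathcal{Q}_P$.

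The only genuine obstacle is the identification lemma, and within it the reverse inclusion: one must ensure the Filippov selection is compatible with the complementarity equality in \eqref{Up1} on the (possibly large) set of times where some $Q_k$ vanishes, so that the reconstructed $Y_k$ are genuinely nondecreasing and \eqref{Dp4} holds. Once $\mathcal{Q}_P$ is known to coincide with the solution set of the differential inclusion, the concatenation statement itself is immediate.
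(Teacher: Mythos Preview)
Your proposal is correct and follows essentially the same route as the paper. The paper does not spell out a proof of Proposition~\ref{comproppriority}; it merely writes ``Using results from differential inclusions we obtain the following result'', relying on the parallel with Proposition~\ref{comprop} and the identification of $\mathcal{Q}_P$ with the solution set of the differential inclusion built from $U_P$, exactly as you reconstruct it. Your worry about the reverse inclusion is not really an obstacle here: since $f(Q,u)=\alpha-(I-P^T)Mu$ is affine and injective in $u$ (because $(I-P^T)M$ is invertible), the control $u(t)$ is uniquely determined by $\dot Q(t)$, so the Filippov selection is trivial and the constraints \eqref{Up1} are automatically satisfied; the conditions \eqref{Dp1}--\eqref{Dp4} then follow directly.
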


Thus we may conclude.

\begin{theorem}
The fluid network under priority discipline $\mathcal{Q}_P$ is a strict GFN model. It is stable if and only if it admits a continuous Lyapunov function.
\end{theorem}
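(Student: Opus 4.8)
The plan is to assemble the conclusion directly from the three pieces that have already been established for $\mathcal{Q}_P$. Specifically, Theorem~\ref{existenceandperfectness} gives nonemptiness and the lower semicontinuity property~(f), Proposition~\ref{comproppriority} gives the concatenation property~(e), and \cite[Lemma~3.5]{Yechen} gives that $\mathcal{Q}_P$ satisfies (a)--(d), i.e. that it is a closed GFN model (condition~(a') of Definition~\ref{mGFN}). Putting these together shows that $\mathcal{Q}_P$ satisfies (a'), (e) and (f), which is precisely the definition of a strict GFN model. Once we know $\mathcal{Q}_P$ is a strict GFN model, the equivalence between stability and the existence of a continuous Lyapunov function is an immediate application of Theorem~\ref{FLtheo}, whose statement and proof already cover every strict GFN model; in fact Theorem~\ref{FLtheo} even identifies the explicit candidate $V(x)=\sup_{Q(\cdot)\in\mathcal{Q}_{P,x}}\int_0^\infty\|Q(s)\|\,ds$ and asserts its continuity.

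So the proof is essentially a one-paragraph citation argument: first invoke \cite[Lemma~3.5]{Yechen} for (a)--(d); then invoke Theorem~\ref{existenceandperfectness} for nonemptiness and (f); then invoke Proposition~\ref{comproppriority} for (e); conclude that $\mathcal{Q}_P$ meets Definition~\ref{mGFN}; and finally apply Theorem~\ref{FLtheo}. There is no genuine obstacle left at this stage, since all the technical work — the existence of priority allocations via the differential-inclusion machinery of Section~\ref{di-elements}, the lower semicontinuity of the solution map, and the absolute-continuity argument for concatenation — has been discharged in the preceding results. The only thing to be a little careful about is making sure the hypotheses line up: Theorem~\ref{FLtheo} requires stability to be meaningful, but the statement is an ``if and only if'', and Lemma~\ref{stabdefequiv} already reconciles the GFN notion of stability with asymptotic stability, so nothing extra is needed.

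If one wanted to be slightly more self-contained rather than purely citational, the mildly delicate point worth spelling out is that Proposition~\ref{comproppriority} should be understood exactly as in the work-conserving case (Proposition~\ref{comprop}): solutions of the closed-loop differential inclusion for the priority discipline are by definition absolutely continuous, the concatenation of two absolutely continuous functions agreeing at the splice point is again absolutely continuous, and the differential inclusion constraint $\dot Q(t)\in F(Q(t))$ is preserved piecewise, hence the concatenation is again a fluid solution and so lies in $\mathcal{Q}_P$. With that observation, (e) holds and the remainder is automatic.

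\begin{proof}
By \cite[Lemma~3.5]{Yechen}, the set $\mathcal{Q}_P$ satisfies conditions (a)--(d) of Definition~\ref{gfn}, hence is a closed GFN model, i.e. condition~(a') of Definition~\ref{mGFN} holds. Theorem~\ref{existenceandperfectness} shows that $\mathcal{Q}_P$ is nonempty and satisfies the lower semicontinuity property~(f), and Proposition~\ref{comproppriority} shows that it satisfies the concatenation property~(e). Therefore $\mathcal{Q}_P$ is a strict GFN model in the sense of Definition~\ref{mGFN}. The claimed equivalence, together with the continuity of the Lyapunov function candidate \eqref{lyapfunc}, now follows at once by applying Theorem~\ref{FLtheo} to $\mathcal{Q}_P$.
\end{proof}
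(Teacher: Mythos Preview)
Your proposal is correct and matches the paper's approach exactly: the paper does not give an explicit proof here but simply writes ``Thus we may conclude'' after having recorded conditions (a)--(d) via \cite[Lemma~3.5]{Yechen}, condition~(f) via Theorem~\ref{existenceandperfectness}, and condition~(e) via Proposition~\ref{comproppriority}, with the stability equivalence then following from Theorem~\ref{FLtheo}. Your written-out proof is precisely the citation argument the paper leaves implicit.
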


\section{Fluid limit models of queueing networks}
\label{sec:fluidlimit}

A further class of interest are fluid limit models of queueing networks. For
this class the open question remained whether they define closed GFN models
\cite{Yechen}. As we will see, taking the closure with respect to uniform
convergence on compact sets does not change the stability properties.  In this
way we obtain from fluid limit models closed GFN models.
We state a condition for which we conjecture that it guarantees condition
(e) but so far a proof has remained elusive.

A queueing network consists of $J$ stations that serve $K$ classes of
customers. For each class $k \in \{1,...,K\}$ the interarrival times are
denoted by $\{\xi_k(n) : n \geq1\}$ and the service times are given by
$\{\eta_k(n) : n \geq1\}$, 
where $n \in \mathbb{N}$ denotes the place in
the sequence of customers of the considered class. It is possible that for
some customer classes $k$ no exogenous arrivals take place, then the
interarrival time $\xi_k(n)=\infty$ for all $n$. The
set of customers with exogenous arrivals is denoted by
\begin{equation*}
\mathcal{E}:= \{k \in \{1,...,K\} : \xi_k(n) < \infty, \, n \geq1\}.
\end{equation*}
Further the waiting buffer at each station is assumed to have infinite capacity. The random variables above are defined on some probability space $(\Omega, \mathcal{F},\p)$. The following assumptions on the interarrival times $\xi_k$ and service times $\eta_k$ are made.
\begin{align}\label{A1}\tag{A1}
\xi_1,...,\xi_K, \eta_1,...,\eta_K \text{ are i.i.d. and mutually independent.} 
\end{align}
The first moments are assumed to be finite, i.e.

\begin{equation}\label{A2}\tag{A2}
\begin{split}
\alpha_k^{-1} &= \E [\,\xi_k(1)\,] < \infty \quad \forall \,k \in \{1,...,K\}, \\  \mu_k^{-1}&=\E [\,\eta_k(1)\,] < \infty \quad \forall \,k \in \mathcal{E}.
\end{split}
\end{equation}

The interarrival times are 
assumed to be unbounded and spread out, i.e. for each $k \in \mathcal{E}$
there exists some integer $j_k \in \N$ and some function $p_k:\R_+
\rightarrow \R_+$ with $\int_0^{\infty} p_k(x) dx >0$, such that $
\p[\,\xi_k(1) \geq x\,] >0$ for all $x >0$ and
\begin{align}\label{A3}\tag{A3}
\p \left[ a \leq \sum_{i=1}^{j_k} \xi_k(i) \leq b \right]\,\, \geq \, \,\int_a^b p_k(x)dx \quad \forall \, 0 \leq a <b.
\end{align}
Let $\phi^k(n)$ be the routing vector for the $n$th customer of class $k$ who finishes service at the station $s(k)$. So $\phi^k(n)$ is a $K$-dimensional Bernoulli random variable with parameter $P_k^T$. The corresponding routing matrix $P$ is assumed to have spectral radius strictly less that one. Further, it is assumed that for each $k \in \{1,...,K\}$ the routing process
\begin{align*}
 \phi^k = \{\phi^k(n):n\geq 1\}
\end{align*}
is i.i.d., $\phi^1,...,\phi^K$ are independent and independent of the arrival processes and service processes. 
The evolution of the queueing network is described by a Markov process $X=\{X(t), t\geq 0\}$ on a measurable state space $(\X,\B_{\X})$ that is defined on a measurable space $(\Omega,\F)$. Further, $X$ is adapted to the filtration $(\F_t)_{t\in \R_+}$ and the probability measures $\{\p_x,x\in \X \}$ on $(\Omega, \F)$ satisfy $\p_x[X(0)=x]=1$ for all $x\in \X$. 
In general the states are given by points 
\[\X \subset \Z_+^{\infty} \times \R_+^{2K + |\mathcal{E}|},\]
where $|\mathcal{E}|$ denotes the cardinality of $\mathcal{E}$ and $\Z_+^{\infty}$ denotes the set of finitely terminating sequences in $\Z_K=\{1,2,...,K\}$. For instance, for priority queueing networks the state space is a subset of $\Z_+^K \times \R_+^{K + |\mathcal{E}|}$. For further details see \cite{bramsonLN,dai}.
Analogous to the fluid models there is a set of equations that embraces most of the network dynamics. Consequently, ${Q^x}(t) \in D(\R_+,\R_+^K)$ denotes the queue length process and ${T}^x(t) \in D(\R_+,\R_+^K)$ denotes the allocation process. The superscript $x$ expresses the initial state $x=(q,u,v)$, where vectors $q,u$ and $v$ denote the queue length, the residual interarrival time, and the residual service time.
%
%
%
Consider a pair of sequence $(r_n,x_n)_{n \in \N}$, where $x_n \in \X$ is a sequence of initial states and $r_n \in \mathbb{R}_+$ such that
\begin{align}\label{sequences}
\lim_{n \rightarrow \infty} r_n= \infty \, , \quad 
\quad \limsup_{n \rightarrow \infty} \frac{\|q_n\|}{r_n} < \infty\, , \quad 
\quad \lim_{n \rightarrow \infty} \frac{\|u_n\|}{r_n} =  \lim_{n \rightarrow \infty} \frac{\|v_n\|}{r_n} = 0.
\end{align}
In \cite{bramsonLN,dai} it is shown that under the assumptions \eqref{A1}-\eqref{A3} for almost all sample paths $\omega \in \Omega$ and any pair of sequence $(r_n,x_n)_{n \in \N}$ satisfying \eqref{sequences} there is a subsequence such that almost surely
\begin{align}\label{scalinglimit}
\frac{1}{r_{n_j}} \,(Q^{x_{n_j}}(r_{n_j}t), T^{x_{n_j}}(r_{n_j}t))\longrightarrow 
(\overline{Q}(t),\overline{T}(t)) \quad \text{ u.o.c.  as   } j\rightarrow \infty,
\end{align}
where $\overline{Q}(\cdot),\overline{T}(\cdot) \in C(\R_+,\R_+^K)$. For a fixed queueing discipline any limit $\overline{Q}(\cdot)$ is called a fluid limit path of the discipline with initial level $\overline{Q}(0)$, if
$(\overline{Q}(t),\,\overline{T}(t))$ are limits in the sense above. The set
of all such fluid limits $\overline{Q}$ is denoted by $\mathcal{Q}_{L}$. We
define the fluid limit model as the closure with respect to uniform
convergence on compact intervals of $\mathcal{Q}_{L}$ and denote it
by $\overline{\mathcal{Q}}_{L}$.

\begin{lemma}
The fluid limit model $\overline{\mathcal{Q}}_L$ is stable if and only if $\mathcal{Q}_{L}$ is stable.
\end{lemma}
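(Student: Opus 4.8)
The plan is to establish the two directions separately; the implication ``$\overline{\mathcal{Q}}_L$ stable $\Rightarrow\mathcal{Q}_L$ stable'' is immediate, and the content lies in showing that stability of $\mathcal{Q}_L$ forces stability of $\overline{\mathcal{Q}}_L$.

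For the easy direction, suppose $\overline{\mathcal{Q}}_L$ is stable with uniform drain time $\tau$. Since $\mathcal{Q}_L\subseteq\overline{\mathcal{Q}}_L$ by construction, we have $\mathcal{Q}_L(1)\subseteq\overline{\mathcal{Q}}_L(1)$, and hence every $Q(\cdot)\in\mathcal{Q}_L(1)$ satisfies $Q(\tau+\cdot)\equiv 0$; thus $\mathcal{Q}_L$ is stable with the same $\tau$. Here I would note in passing that $\mathcal{Q}_L$ itself satisfies (a)--(c), so that the notion of stability of Definition~\ref{GFNstab} applies to it verbatim.

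For the converse, assume $\mathcal{Q}_L$ is stable, so there is a uniform $\tau>0$ with $Q(\tau+\cdot)\equiv 0$ for all $Q(\cdot)\in\mathcal{Q}_L(1)$. First I would record that $\mathcal{Q}_L$ has the scaling property~(b): if $\overline Q(\cdot)$ is a fluid limit along a pair of sequences $(r_{n_j},x_{n_j})$ as in \eqref{scalinglimit} and $r>0$, then $(r\,r_{n_j},x_{n_j})$ again satisfies \eqref{sequences}, and the associated scaling limit exhibits $\tfrac1r\overline Q(r\cdot)$ as an element of $\mathcal{Q}_L$. Now take an arbitrary $Q_*(\cdot)\in\overline{\mathcal{Q}}_L(1)$ and choose $(Q_n)_{n\in\N}\subset\mathcal{Q}_L$ with $Q_n\to Q_*$ u.o.c. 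Since u.o.c.\ convergence entails pointwise convergence, $q_n:=\|Q_n(0)\|\to\|Q_*(0)\|=1$, so $q_n>0$ for all large $n$, and by the scaling property $\tfrac1{q_n}Q_n(q_n\cdot)\in\mathcal{Q}_L(1)$. Stability of $\mathcal{Q}_L$ then gives $Q_n(s)=0$ for every $s\ge q_n\tau$. As $q_n\tau\to\tau$, for each fixed $s>\tau$ we have $Q_n(s)=0$ for all large $n$, hence $Q_*(s)=\lim_n Q_n(s)=0$; and since $Q_*$ is continuous (a u.o.c.\ limit of continuous paths), $Q_*(\tau)=0$ as well. Therefore $Q_*(\tau+\cdot)\equiv 0$, and because $\tau$ does not depend on $Q_*$, the model $\overline{\mathcal{Q}}_L$ is stable.

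This is a straightforward limiting argument, so there is no serious conceptual obstacle; the two points that require care are (i) checking that the rescaled index sequences $(r\,r_{n_j},x_{n_j})$ still obey the growth conditions \eqref{sequences}, so that $\mathcal{Q}_L$ genuinely enjoys the scaling property, and (ii) not assuming that the approximating paths $Q_n$ satisfy $\|Q_n(0)\|=1$ exactly — this is exactly why one must interpose the rescaling $\tfrac1{q_n}Q_n(q_n\cdot)$ before invoking stability. Alternatively, the limit step can be phrased through Lemma~\ref{stabdefequiv} and the characterization of stability used in its proof, but the direct argument above seems shortest.
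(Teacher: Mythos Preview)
Your proof is correct and follows the same limiting argument as the paper: approximate $Q_*\in\overline{\mathcal{Q}}_L$ by paths $Q_n\in\mathcal{Q}_L$, use stability of $\mathcal{Q}_L$ to conclude $Q_n$ drains, and pass to the limit. In fact you are more careful than the paper's own proof, which tacitly writes $Q_n(\tau+\cdot)\equiv 0$ without adjusting for $\|Q_n(0)\|\neq 1$; your interposed rescaling $\tfrac{1}{q_n}Q_n(q_n\cdot)$ is exactly the step needed to make that line rigorous.
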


\begin{proof}
    Obviously, if $\overline{\mathcal{Q}}_L$ is stable then
    $\mathcal{Q}_L$ is stable. Conversely, assume that $\mathcal{Q}_L$ is
    stable. Let $Q_*(\cdot)\in \overline{\mathcal{Q}}_L \, \backslash
    \mathcal{Q}_L$ and $Q_n(\cdot) \in \mathcal{Q}_L$ be a sequence such
    that $Q_n(\cdot) \rightarrow Q_*(\cdot)$ u.o.c. as $n\rightarrow
    \infty$. Since $\mathcal{Q}_L$ is stable there is a uniform $\tau>0$
    such that $Q_n(\tau+\cdot)\equiv 0$ for all $n \in \mathbb{N}$. It
    follows for all $t\geq \tau$ that
\begin{align*}
 Q_*(t) = \lim_{n\rightarrow \infty} Q_n(t) = 0
\end{align*}
and the proof is completed.

\end{proof}

\begin{proposition}
The fluid limit model $\overline{\mathcal{Q}}_L$ defines a closed GFN model.
\end{proposition}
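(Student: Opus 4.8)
The plan is to verify the four defining conditions of a closed GFN model from Definition~\ref{gfn} directly for $\overline{\mathcal{Q}}_L$. The key observation is that all four conditions are preserved under u.o.c.\ closure, so it suffices to check that $\mathcal{Q}_L$ itself satisfies (a)--(c) and then add (d) for free by construction. Condition (d) is immediate: $\overline{\mathcal{Q}}_L$ is \emph{defined} as the u.o.c.\ closure of $\mathcal{Q}_L$, and the u.o.c.\ closure of a set is closed under u.o.c.\ limits (a diagonal argument on approximating sequences), so $\overline{\mathcal{Q}}_L$ satisfies (d) automatically.

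For (a), I would recall the standard fact from the fluid-limit literature (e.g.\ \cite{bramsonLN,dai}) that any fluid limit $\overline{Q}(\cdot)$ arising from \eqref{scalinglimit} is Lipschitz with a constant that depends only on the network data $(\alpha,\mu,P)$ and not on the initial state: the flow balance relation $\overline{Q}(t)=\overline{Q}(0)+\alpha t-(I-P^{\trans})M\overline{T}(t)$ together with $\overline{T}$ being $1$-Lipschitz componentwise (being a scaled allocation process) yields a uniform bound $L:=\|\alpha\|+\|(I-P^{\trans})M\|$. This uniform $L$ passes to u.o.c.\ limits, so every element of $\overline{\mathcal{Q}}_L$ is $L$-Lipschitz. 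For (b), the scaling property, I would note that if $\overline{Q}(\cdot)$ is obtained as a limit along $(r_n,x_n)$ then $\frac1r\overline{Q}(r\cdot)$ is obtained as the limit along the rescaled pair $(r_n/r, x_n)$ (or equivalently $(r_n, x_n)$ with time rescaled), which still satisfies \eqref{sequences}; hence $\frac1r\overline{Q}(r\cdot)\in\mathcal{Q}_L$. This scaling commutes with u.o.c.\ convergence (uniform convergence on $[0,T]$ of $Q_n$ gives uniform convergence on $[0,T/r]$ of $\frac1r Q_n(r\cdot)$), so (b) holds on $\overline{\mathcal{Q}}_L$. For (c), the shift property, the argument is similar but slightly more delicate: if $\overline{Q}\in\mathcal{Q}_L$ comes from the limit of $\frac{1}{r_{n_j}}Q^{x_{n_j}}(r_{n_j}\cdot)$, then $\overline{Q}(s+\cdot)$ is the limit of $\frac{1}{r_{n_j}}Q^{y_{n_j}}(r_{n_j}\cdot)$ where $y_{n_j}$ is the state of the Markov process at time $r_{n_j}s$; one checks that this shifted sequence of initial states still satisfies \eqref{sequences} (the queue length stays comparable to $r_{n_j}$ by the uniform Lipschitz bound, and the residuals remain $o(r_{n_j})$). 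Again shifting commutes with u.o.c.\ limits, so (c) lifts to $\overline{\mathcal{Q}}_L$.

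I expect the main obstacle to be the shift property (c) for $\mathcal{Q}_L$: one must be careful that after evolving the queueing network for a macroscopic time $r_{n_j}s$ the resulting initial state $y_{n_j}$ still satisfies the growth conditions \eqref{sequences}, in particular that the residual interarrival and service times remain $o(r_{n_j})$. This is where the unboundedness/spread-out assumption \eqref{A3} and the finite-mean assumption \eqref{A2} do their work, and the cleanest route is to cite the fact from \cite{bramsonLN,dai} that the fluid-limit construction is itself closed under time shifts (essentially because the limit is taken ``along almost every sample path'' and the shifted process is still governed by the same renewal structure). The remaining steps---closure under scaling, and the passage of each property through the u.o.c.\ closure---are routine once the commutation of scaling/shifting with uniform convergence on compacts is noted, together with the uniform Lipschitz constant which makes all the relevant families equicontinuous.
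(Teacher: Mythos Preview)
Your outline is correct and matches the paper on (a), (b), (d): the paper simply cites \cite{Yechen} for Lipschitz continuity and scaling, and closedness holds by construction of $\overline{\mathcal{Q}}_L$. The substantive difference is in (c). You propose a pathwise argument---shift to the random state $y_{n_j}=X^{x_{n_j}}(r_{n_j}s)$ and verify \eqref{sequences} for the new sequence---whereas the paper follows Robert~\cite{robert2003queues} and argues distributionally: the Markov property gives $Q^{x_{n_k}}(r_{n_k}(t+s))\overset{d}{=}Q^{X^{x_{n_k}}(r_{n_k}s)}(r_{n_k}t)$, Skorokhod's representation theorem upgrades distributional convergence to almost-sure convergence on a common probability space, and one then reads off $\overline{Q}(\cdot+s,\overline{q})\in\mathcal{FL}(\overline{Q}(s,\overline{q}))$ directly.

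The subtlety your route must confront, and which the paper's route sidesteps, is that your shifted initial states $y_{n_j}$ are \emph{random}, while $\mathcal{Q}_L$ is defined via \eqref{scalinglimit}--\eqref{sequences} for deterministic sequences $(r_n,x_n)$. Your check that residual interarrival and service times remain $o(r_{n_j})$ is fine (renewal age over $t$ tends to zero a.s.\ under \eqref{A2}), but that alone does not place $\overline{Q}(s+\cdot)$ in $\mathcal{Q}_L$: you still need either that the full-measure ``good'' set of sample paths is invariant under the shifts $\theta_{r_{n_j}s}$ (so the shifted driving sequences retain the functional-SLLN properties underpinning the fluid-limit construction), or an argument that the definition of $\mathcal{Q}_L$ tolerates $\omega$-dependent initial data. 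The Skorokhod-representation argument buys exactly this: it manufactures, on an auxiliary space, a genuine fluid-limit realization of the shifted process without having to track which $\omega$ one is on. If you keep the pathwise route, this measurability/invariance point is the step that needs to be made explicit rather than deferred to the literature.
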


\begin{proof}
The Lipschitz continuity and the scaling property are shown in \cite{Yechen}. 
To show the shift property we follow an idea that is due to \cite[Section~9.2.3]{robert2003queues}. Let $\mathcal{FL}(\overline{q})$ denote the set of fluid limits with initial level $\overline{q}$, i.e.
\begin{align*}
\mathcal{FL}\,(\overline{q}) := \{ \, \overline{Q}: \R_+  \rightarrow \R_+^K \, : \,  \overline{Q}(t,\overline{q}) = 
\lim_{ {n \rightarrow \infty,}}  \frac{ 1}{r_n}Q^{x_n}(r_n\,t)   \, , \, \overline{Q}(0)=\overline{q} \,  \} 
\end{align*}
We fix a pair of sequences $(r_n,x_n)_{n\in \mathbb{N}}$ that satisfies \eqref{sequences}
and $\lim_{n \rightarrow \infty} \frac{x_n}{r_n} = (\overline{q},0,0)$. Then, by the Skorokhod's Theorem~\cite[Theorem C.6]{robert2003queues} we have along a subsequence
\begin{align*}
\lim_{k\rightarrow \infty} \frac{1}{r_{n_k}} \,Q^{x_{n_k}}(r_{n_k}\,t) = \overline{Q}^{{n_k}}(t,\overline{q}) \in \mathcal{FL}(\overline{q}). 
\end{align*}
a.s. in the Skorokhod topology. The superscript to the fluid limit expresses the dependence on the particular sequence. Moreover, by the Markov property we have the following equality in distribution
\begin{align}\label{eq:eq-distr}
Q^{x_{n_k}}(r_{n_k}(t+s)) \overset{d}{=} Q^{ Q^{x_{n_k}}(r_{n_k}s) }(r_{n_k}t).
\end{align}
Also, by Proposition~3.5.2 in \cite{ethier1986markov} and $t \mapsto \overline{Q}^{{n_k}}(t,\overline{q})$ is continuous  
it holds that
\begin{align*}
\lim_{k\rightarrow \infty} \frac{1}{r_{n_k}} \,Q^{x_{n_k}}(r_{n_k}s) = \overline{Q}^{{n_k}}(s,\overline{q})\quad \mbox{ a.s.}. 
\end{align*}
Consequently, dividing \eqref{eq:eq-distr} by $r_{n_k}$ and taking limits yields that
\begin{align*}
\overline{Q}^{{n_k}}(t+s) \overset{d}{=} \overline{Q}^{n_k} (t, \overline{Q}^{{n_k}}(s))
\end{align*}
and hence we have
\begin{align*}
\overline{Q}^{n_k}(\cdot+s,\overline{q})  \in   \mathcal{FL} \, (\, \overline{Q}^{n_k}(s,\overline{q})\,  ) \,. 
\end{align*}
This shows the assertion.

\end{proof}

In the following we consider queueing networks under disciplines that are memoryless in the sense that the allocation process $\overline{T}$ of the fluid limit model at a time $t$ does only depend on the queue length at that time $t$. In particular, it does not require information of the past. In terms of the fluid limit models described in \cite{dai} this means that only the fluid level at a given time is needed to describe the evolution of the fluid level process.
Note that this explicitly excludes a number of disciplines as e.g. FIFO networks. We will comment on FIFO fluid networks in Section~\ref{FIFOsec}.
We also note that the problem of concatenating fluid limits was also
addressed by A. Stolyar \cite{stolyar95} and Ph. Robert
\cite[Section~9.2.3]{robert2003queues}. In \cite{stolyar95} it is shown that if the
queueing disciplines in every station satisfy a certain 'uniqueness
condition' on the disciplines of the individual servers the concatenation
property holds. However, there the definition of state is different,
because the state as used in \cite{stolyar95} includes the past trajectory
of the queue. Furthermore, in \cite{robert2003queues} concatenation is
possible if the fluid limits going through a certain queue level $Q$ are
unique.

\begin{remark}\label{rem:concat-not}
    Consider a queueing network with a memoryless discipline. We
    conjecture that in this case the fluid limit model
    $\overline{\mathcal{Q}}_L$ satisfies the concatenation
    property. Unfortunately, this claim has shown some resilience towards
    attempts of proof.
\end{remark}

Due to fact that $\overline{\mathcal{Q}}_L$ is closed by definition we would obtain the following result.
\vspace{.1125cm}\\
\textbf{Conjecture}\hspace{1ex} {\it The fluid limit model of a
  ''memoryless'' discipline defines a GFN model satisfying (e). It is
  stable if and only if it admits an upper semicontinuous Lyapunov
  function.}

\vspace{.25cm}
The conjecture holds true for the systems considered in
\cite{robert2003queues}, but unfortunately, the interesting fluid limits
do not have unique paths. As to the question of under which conditions
fluid limit models satisfy condition (f) we dare not venture a conjecture.

\section{The linear Skorokhod problem}\label{sec:skorokhod}

Another possible way to approximate a multiclass queueing network is to consider the so called diffusion limit. This limit can be regarded as a semi-martingale reflected Brownian motion (SRBM). Similar to the fluid limit, a sufficient condition for the stability of the SRBM is the stability of the linear Skorokhod problem (LSP) \cite{DP94}. The following description is taken from \cite{chen96} and \cite{Yechen}. Let $R$ be a $J\times J$ matrix, $\theta \in \R^J$ and $Z_0 \in \R_+^J$. The pair $(Z(\cdot),Y(\cdot)) \in C(\mathbb{R}_+,\R_+^J)$ is said to solve the LSP $(\theta,R)$ with initial state $Z_0$, if they jointly satisfy 
\begin{align}\label{LSP}
 Z(t) &= Z_0 + \theta t +R \,Y(t) \geq 0,\\ \label{LSP2}
{Y}(0) &=0 \text{  and  } {Y}(\cdot) \text{ is nondecreasing, } \\ \label{LSP3}
0&=\int_0^{\infty} Z_j(t) \,d{Y_j}(t), \qquad j=1,...,J.
\end{align}

The first question that arises is, which conditions guarantee the
existence of a solution of the LSP($\theta,R$). In oder to state such a
condition 
recall that 
a $J\times J$ matrix $R$ is said to be an $S$-matrix, if there
exists an $x\geq 0$ such that $Rx>0$, and is said to be completely-$S$ if
all of its principal submatrices are $S$-matrices. The following theorem
from \cite[Theorem 1]{bernard} contains the desired statement.

\begin{theorem}\label{sol-compl-S}
The LSP($\theta,R$) has a solution ($Z(\cdot),Y(\cdot)$) if and only if the matrix $R$ is completely-$S$.
\end{theorem}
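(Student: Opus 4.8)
The statement should be understood as: for a fixed $J\times J$ matrix $R$, the problem LSP$(\theta,R)$ admits a solution $(Z(\cdot),Y(\cdot))$ for every $\theta\in\R^J$ and every $Z_0\in\R_+^J$ if and only if $R$ is completely-$S$. I would treat the two implications separately.

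For \emph{necessity} I would argue by contraposition. If $R$ is not completely-$S$, pick $\kappa\subseteq\{1,\dots,J\}$ with $R_{\kappa\kappa}$ not an $S$-matrix, i.e.\ $\{x\geq 0:\ R_{\kappa\kappa}x>0\}=\emptyset$. A theorem of the alternative (linear programming duality) then yields a nonzero $y\geq 0$ with $R_{\kappa\kappa}^{\trans}y\leq 0$; extending $y$ by zeros to $\R^J$ gives $(R^{\trans}y)_k\leq 0$ for $k\in\kappa$. Take $Z_{0,k}=0$ for $k\in\kappa$, $Z_{0,k}=1$ for $k\notin\kappa$, and $\theta_k=-1$ for all $k$. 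If $(Z,Y)$ solved this LSP, continuity gives an $\e>0$ with $Z_k>0$ on $[0,\e]$ for each $k\notin\kappa$, so \eqref{LSP3} forces $Y_k\equiv 0$ on $[0,\e]$ for those $k$; hence on $[0,\e]$ we get $y^{\trans}RY(t)=\sum_{k\in\kappa}(R^{\trans}y)_kY_k(t)\leq 0$, and therefore $y^{\trans}Z(t)=y^{\trans}Z_0+(y^{\trans}\theta)t+y^{\trans}RY(t)\leq(y^{\trans}\theta)t<0$ for $t\in(0,\e]$, contradicting $Z(t)\geq 0$, $y\geq 0$. Hence $R$ is completely-$S$.

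For \emph{sufficiency}, which is the substantial part, I would construct a solution by an Euler-type time discretization together with a compactness argument. Fix $T>0$, partitions of $[0,T]$ of mesh $h\to 0$, and thresholds $\e_h\to 0$ with $\e_h/h\to\infty$ (say $\e_h=\sqrt h$). At a grid point with current value $z\geq 0$ put $\kappa=\{k:\ z_k<\e_h\}$; since $R_{\kappa\kappa}$ is an $S$-matrix there is $x^{\kappa}\geq 0$, supported on $\kappa$, with $(Rx^{\kappa})_k>0$ for $k\in\kappa$, and one checks that an increment $\Delta Y=\lambda h\,x^{\kappa}$ with $\lambda\geq 0$ chosen uniformly in $z,h$ (depending only on $R$, $\theta$ and the finitely many index sets $\kappa$) keeps the linearly interpolated trajectory in $\R_+^J$ over the step. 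Iterating produces piecewise-linear pairs $(Z^h,Y^h)$ that are uniformly Lipschitz on $[0,T]$; Arzel\`a--Ascoli gives a subsequence converging u.o.c.\ to a pair $(Z,Y)$. The limit obviously satisfies \eqref{LSP} and \eqref{LSP2}; for \eqref{LSP3} one uses that $Y^h_j$ increases only while $Z^h_j<\e_h$, whence $\int_0^T Z^h_j\,dY^h_j\leq\e_h\,Y^h_j(T)\to 0$, and combines uniform convergence $Z^h_j\to Z_j$ with the weak convergence of the monotone Stieltjes measures $dY^h_j\to dY_j$ to conclude $\int_0^T Z_j\,dY_j=0$; letting $T\to\infty$ along a diagonal subsequence yields a global solution.

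The main obstacle lies entirely in the sufficiency argument, in two places. First, one must show the one-step correction can be chosen with a uniform bound as the active face $\kappa$ changes along the discrete trajectory, controlling the effect of the correction on the coordinates outside $\kappa$; this is precisely where completely-$S$ (and not merely the $S$-property of one face) enters. Second, the limit passage in the complementarity relation \eqref{LSP3} requires care, since it couples uniform convergence of $Z^h$ with weak convergence of the induced measures $dY^h$. This is the path of \cite[Theorem 1]{bernard}; note that uniqueness of solutions is a strictly stronger requirement and is not asserted here.
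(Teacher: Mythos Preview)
The paper does not supply its own proof of this theorem; it simply quotes the result from \cite[Theorem~1]{bernard}. Your sketch reproduces both directions of that reference's argument --- the alternative-theorem obstruction for necessity and the Euler-discretization/compactness construction for sufficiency --- and you correctly single out the two genuine technical points (uniform control of the reflection increments as the active face changes, and passage to the limit in the complementarity condition~\eqref{LSP3}).
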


Analogous to the previous subsections we define 
\begin{align*}
\mathcal{Q}_{LSP}=\{ Z(\cdot) : \, \exists \, Y(\cdot) \mbox{ such that } (Z(\cdot),Y(\cdot))\text{ satisfy }\eqref{LSP}-\eqref{LSP3}\}.
\end{align*}

Note that Theorem~\ref{sol-compl-S} states only the existence of a solution. In general the solution is not unique, for a counterexample see e.g. \cite{bernard}.

\begin{definition}\label{LSPstab}
A LSP$(\theta,R$) is said to be stable if, for any number $\e>0$ and any $Z(\cdot) \in \mathcal{Q}_{LSP}$ with $\|Z_0\|=1$, there exists a $\tau \geq 0$ such that $\|Z(\tau + \cdot)\| < \e$.
\end{definition}

To ensure that the set $\Q_{LSP}$ is nonempty, Theorem~\ref{sol-compl-S} states that $R$ has to be completely-$S$. In \cite[Theorem 5.2]{Yechen} it is shown that in this case Definition~\ref{LSPstab} is equivalent to Definition~\ref{GFNstab}. To derive a necessary and sufficient condition for stability of the linear Skorokhod problem we have to show that $\mathcal{Q}_{LSP}$ is a strict GFN model. The next lemma from \cite[Lemma 1]{bernard} or \cite[Lemma 5.1]{Yechen} implies that $\mathcal{Q}_{LSP}$ satisfies the Lipschitz condition.

\begin{lemma}\label{reglip}
If the matrix $R$ is completely-$S$, then there exists a constant $M$ such that any solution ($Z(\cdot),Y(\cdot)$) of LSP$(\theta,R$) is Lipschitz continuous with constant $M$. 
\end{lemma}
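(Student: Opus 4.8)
The claim is the standard Lipschitz-regularity statement for solutions of the linear Skorokhod problem when $R$ is completely-$S$; the plan is to follow the classical oscillation-estimate argument, essentially as in \cite{bernard} or \cite{Yechen}, and verify that nothing in our setting obstructs it. The key quantity to control is the total variation of the reflection term $Y(\cdot)$ on a time interval $[s,t]$ in terms of the length $t-s$. First I would fix the driving data $(\theta,R)$ and note that, since $\theta$ is a constant vector, the free part $Z_0 + \theta t$ of \eqref{LSP} is globally Lipschitz with constant $\|\theta\|$. Hence the only thing that needs to be bounded is the contribution of $R\,Y(\cdot)$, and since $R$ is a fixed matrix it suffices to produce a constant $c$ with $\|Y(t)-Y(s)\| \le c\,(t-s)$ for every solution pair $(Z(\cdot),Y(\cdot))$ and all $0\le s\le t$; then $M := \|\theta\| + \|R\|\,c$ works.

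The central tool is the complementarity/minimality property \eqref{LSP3}: $Y_j$ only increases when $Z_j=0$, i.e. $Y$ is the minimal nondecreasing process keeping $Z$ in $\R_+^J$. The standard way to exploit the completely-$S$ hypothesis is to pick, for each principal index set $A\subseteq\{1,\dots,J\}$, a vector $x^A\ge 0$ supported on $A$ with $(R_{AA}\,x^A)_i > 0$ for all $i\in A$ (possible by the $S$-matrix property of the principal submatrix $R_{AA}$), and to form a suitable positive linear functional / test vector that is decreased by the reflection at the active faces. Concretely, on an interval where the set of active constraints (those $j$ with $Y_j$ increasing) is contained in a fixed set $A$, testing \eqref{LSP} against such an $x^A$ and using $Z\ge 0$, $\langle x^A, Z(t)\rangle - \langle x^A, Z(s)\rangle = \langle x^A,\theta\rangle (t-s) + \langle x^A, R(Y(t)-Y(s))\rangle$ together with the sign of the diagonal-block action yields a bound of the form $\sum_{j\in A}(Y_j(t)-Y_j(s)) \le C_A\,(t-s)$. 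Taking the maximum of the finitely many constants $C_A$ over all subsets $A$, and summing the contributions, gives the global bound on $Y$; I would also invoke the fact (already implicit in \cite{bernard}) that between consecutive changes of the active set the estimate is uniform, so the partition into such subintervals does not accumulate badly.

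I would actually not reprove this from scratch but cite \cite[Lemma 1]{bernard} (equivalently \cite[Lemma 5.1]{Yechen}) essentially verbatim, since the statement there is exactly that a completely-$S$ matrix $R$ forces a uniform Lipschitz constant $M$ depending only on $(\theta,R)$ and not on the particular solution or initial state. The only point worth spelling out in our context is that the uniformity is over all solutions simultaneously — which is what condition (a) of Definition~\ref{gfn} demands — and this is built into the cited estimate because the constant $C_A$ above depends only on $R_{AA}$, $x^A$, and $\|\theta\|$. The main obstacle, and the reason the argument is not entirely trivial, is controlling the behaviour when the active set of constraints changes infinitely often or on a set of times of positive measure; the completely-$S$ hypothesis is precisely what rules this out, via the existence of the test vectors $x^A$ for \emph{every} principal submatrix, and this is exactly the content that the cited lemma packages for us.
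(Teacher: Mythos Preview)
Your proposal is correct and aligns with the paper's treatment: the paper does not prove this lemma at all but simply cites it from \cite[Lemma~1]{bernard} (equivalently \cite[Lemma~5.1]{Yechen}), exactly as you propose to do. Your additional sketch of the oscillation-estimate argument behind those references is sound and adds helpful context the paper omits.
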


The fact that $\mathcal{Q}_{LSP}$ is closed follows from Proposition 1 in \cite{bernard}. Furthermore that the scale, shift property hold is stated in \cite[Section 2]{elkharroubi}. So it remains to investigate whether $\mathcal{Q}_{LSP}$ satisfies the concatenation and the lower semicontinuity property. Again we bring the linear Skorokhod problem into the context of differential inclusions. That is, let $\dot Y(t)= u$ and 

\begin{align}\label{LSP_DIGdef}
 G(Z) =  \left\{  \theta  +R  u \, : \,u \in U_{LSP}(Z) \right\},
\end{align}

where the set of admissible controls $U_{LSP}$ is determined through the conditions

\begin{align}\label{LSPu1}
u &\geq 0, \quad
Z_j \,u_j= 0, \quad \forall \, \, j=1,...,J.
\end{align}

While it is clear that the set described by \eqref{LSPu1} is unbounded on the boundary of the positive orthant, Lemma~\ref{reglip} may be used to see that the effective set of controls is bounded.
Indeed from the Lipschitz continuity of solutions, only values of $u$ below a certain bound need to be considered in \eqref{LSP_DIGdef}. 
 The corresponding differential inclusion is of the form

\begin{align}\label{LSP_DI}
\dot Z(t) \in G(Z(t)), \qquad Z(0)=Z_0.
\end{align}

It can can seen that the right-hand side is upper semicontinuous and the set $G(Z)$ is convex and compact. Again arguments from the theory of differential inclusions show the validity of the concatenation property. 

\begin{theorem}
$\mathcal{Q}_{LSP}$ is a closed GFN model satisfying (e). It is stable if and only if it admits an upper semicontinuous Lyapunov function.
\end{theorem}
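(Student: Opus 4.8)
The proof assembles properties that have, for the most part, already been recorded above. The Lipschitz condition (a) is Lemma~\ref{reglip}, closedness (d) is Proposition~1 of \cite{bernard}, and the scaling~(b) and shift~(c) properties are taken from \cite[Section~2]{elkharroubi}; together with the existence statement of Theorem~\ref{sol-compl-S} (which, since $R$ is completely-$S$, guarantees a solution for every initial state and hence $\mathcal{A}(\mathcal{Q}_{LSP})=\R_+^J$ and $\mathcal{Q}_{LSP}\neq\emptyset$), this shows that $\mathcal{Q}_{LSP}$ is a closed GFN model. What remains is the concatenation property~(e); once this is in place, the stability characterization is exactly the Corollary following Theorem~\ref{FLtheo}, which yields that $\mathcal{Q}_{LSP}$ is stable if and only if it admits a Lyapunov function in the sense of Definition~\ref{def:Lyapdef}, and that $V$ from \eqref{lyapfunc} is such a function and is upper semicontinuous by Proposition~\ref{usc}.

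To obtain~(e) I would, exactly as in the treatment of work-conserving networks in Section~\ref{appl-fn}, identify $\mathcal{Q}_{LSP}$ with the set of solutions of the differential inclusion~\eqref{LSP_DI} that stay in $\R_+^J$. The nontrivial point is the equivalence of the two descriptions. Given a solution $(Z(\cdot),Y(\cdot))$ of the LSP, Lemma~\ref{reglip} shows that $Y(\cdot)$ is Lipschitz, hence absolutely continuous; setting $u(t):=\dot Y(t)$ almost everywhere, condition~\eqref{LSP2} gives $u\geq 0$, and the complementarity relation~\eqref{LSP3} together with $Z_j\geq 0$ and $Y_j$ nondecreasing forces $Z_j(t)\,u_j(t)=0$ for almost every $t$ and every $j$, so that $u(t)\in U_{LSP}(Z(t))$ a.e. and $Z(\cdot)$ solves~\eqref{LSP_DI} while staying in $\R_+^J$. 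Conversely, if $Z(\cdot)\in\mathcal{S}_G(Z_0)$ takes values in $\R_+^J$, then, using that $G$ restricted to the effective (bounded, by Lemma~\ref{reglip}) range of controls is upper semicontinuous with compact convex values, the Filippov measurable selection lemma~\cite[p.~78/79]{filippov} (applied as in the proof of Theorem~\ref{existence}) produces a measurable $u(t)\in U_{LSP}(Z(t))$ with $\dot Z(t)=\theta+R\,u(t)$ a.e.; then $Y(t):=\int_0^t u(s)\,ds$ is nondecreasing with $Y(0)=0$, and $Z_j(t)\,u_j(t)=0$ a.e. gives $\int_0^\infty Z_j(t)\,dY_j(t)=0$, so $(Z(\cdot),Y(\cdot))$ solves the LSP. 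Hence $\mathcal{Q}_{LSP}=\{Z(\cdot)\in\mathcal{S}_G(Z_0):Z_0\in\R_+^J\}$.

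With this identification,~(e) is immediate: if $Z_1(\cdot),Z_2(\cdot)\in\mathcal{Q}_{LSP}$ and $Z_1(t^*)=Z_2(0)$, the concatenation $Z_1\diamond_{t^*}Z_2(\cdot)$ is absolutely continuous (continuity at $t^*$ holds by construction and concatenation preserves absolute continuity), it satisfies $\dot Z(t)\in G(Z(t))$ for almost every $t$, and it stays in $\R_+^J$; hence it is again a solution of~\eqref{LSP_DI}, i.e.\ an element of $\mathcal{Q}_{LSP}$. Combining~(a)--(e) with the Corollary after Theorem~\ref{FLtheo} then completes the proof. I expect the main obstacle to be the measure-theoretic bookkeeping in the equivalence of the two formulations — in particular the passage between the integral complementarity condition~\eqref{LSP3} and its pointwise form $Z_j\,u_j=0$ a.e., and the verification that Lemma~\ref{reglip} indeed provides a Lipschitz (not merely continuous) $Y(\cdot)$ together with a bound on the effective controls, so that $\dot Y$ is defined almost everywhere and the selection argument is applicable; the remaining steps are routine given the differential-inclusion machinery of Section~\ref{di-elements}.
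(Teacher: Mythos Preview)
Your proposal is correct and follows essentially the same route as the paper: you establish (a)--(d) from the same cited sources, cast the LSP as the differential inclusion~\eqref{LSP_DI} to obtain the concatenation property~(e), and then invoke the Corollary after Theorem~\ref{FLtheo} for the stability characterization and upper semicontinuity of $V$. The paper compresses the concatenation argument into the single sentence ``arguments from the theory of differential inclusions show the validity of the concatenation property''; your explicit two-way identification of $\mathcal{Q}_{LSP}$ with the solution set of~\eqref{LSP_DI} (via Lemma~\ref{reglip} in one direction and Filippov selection in the other) is exactly the content behind that sentence, spelled out in the same spirit as Theorem~\ref{existence} and Proposition~\ref{comprop}.
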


We note that it is not obvious whether the right-hand side is also lower
semicontinuous.

\begin{remark}
    The consequence of the above theorem is, that the main theorem is
    applicable for the linear Skorokhod problem. However, for the provided
    Lyapunov function we can only show upper semicontinuity.
\end{remark}

\section{Remarks on fluid networks under FIFO service discipline}\label{FIFOsec}

For fluid networks that work under the FIFO service discipline the fluids are served in the order of their arrivals. To describe the evolution of class $k$ fluids we have to consider the workload $W(t) = C \, M^{-1}\, Q(t)$ of the station $j=s(k)$. For any time $t$ all jobs that arrive later than $t$ have lower priority in the FIFO discipline. So fluids that arrive at time $t$ are served at time $t +W_j(t)$.
The total arrivals of each fluid class until time $t$ is $A(t) = \alpha t + P^T \, M \,T(t)$. The characteristic of a FIFO fluid network can for each class $k \in \{1,...,K\}$ be represented by the following relation
\begin{align}\label{FIFO}
T_k(t + W_j(t)) = m_k (Q_{k}(0) + A_k(t)),
\end{align}
where $m_k= \mu_k^{-1}$. Note that the fluid network is not completely determined by the initial fluid level $Q(0)$ as it has to specified in which order the initial fluid level is served in the time period $[0,W_j(0)]$. So the initial data for each class $k \in \mathcal{K}$ is given by 
\begin{align*}
\{ T_k(s)\, : \,   s \in [0,W_j(0)] \, \}.
\end{align*}

The dynamics of a fluid network under FIFO service discipline is given by \eqref{D1}-\eqref{D4} and \eqref{FIFO}. 
Analogously to the previous disciplines we denote
\begin{align*}
\mathcal{Q}_F=\{ Q(\cdot) : \, \exists \, T(\cdot) \mbox{ such that } (Q(\cdot),T(\cdot)) \text{ is a solution }  \eqref{D1}-\eqref{D4},\eqref{FIFO}\, \}.
\end{align*}
In \cite[Lemma 3.7]{Yechen} it is shown that $\mathcal{Q}_F$ is a closed GFN model. However the fluid networks under FIFO discipline differ from the previous fluid models. One reason for this is the following. Consider again the flow balance equation in differential form, i.e.
\begin{align*}
 \dot Q (t) =  \alpha - (I-P^T)\, M \dot T(t).
\end{align*}
In the FIFO case the allocation process has to satisfy a functional differential equation of neutral type \cite{hale93}, since the allocation process has to satisfy the differential form of condition \eqref{FIFO} 
\begin{align*}
 \dot T_k (t + W_j(t)) \, \, (1+  \dot W_j(t)) =  m_k \alpha_k - m_k \,\sum_{l=1}^K p_{lk} \mu_l \dot T_l(t).
\end{align*}
The second reason is given two paths $Q_1(\cdot)$ and $Q_2(\cdot)$ of $\mathcal{Q}_F$ that coincide at some time, they will in general have different history, so that the concatenation is not immediately possible. In this context the initial data $\{ T_k(s)\, : \,   s \in [0,W_j(0)] \, \}$ plays a key role. 
An explicit counterexample to the concatenation property for FIFO networks
my be found in \cite{schoenleinthesis}.

\section{Conclusion}
\label{conclusions}

In this paper we have derived a converse Lyapunov theorem for generic fluid
networks under a concatenation condition. Continuity of the Lyapunov function
is ensured if the solution set of the fluid network has a a lower
semicontinuity property. Continuity is of interest because this would ensure
robustness properties of the network subject to unknown parameters or external
perturbations. The interesting class of FIFO networks does not immediately
fall under the results presented here. The question of a Lyapunov theory for
this and related cases is the subject of ongoing research.


%

\end{document}